\newtheorem{theorem}{Theorem}[section]
\newtheorem{lemma}[theorem]{Lemma}
\newtheorem{proposition}[theorem]{Proposition}
\newtheorem{definition}[theorem]{Definition}
\newtheorem{remark}{Remark}[section]
\theoremstyle{definition}
\newtheorem{example}{Example}[section]
\newcommand{\norm}[1]{\Vert#1\Vert}
\newcommand{\Norm}[1]{\left\Vert#1\right\Vert}
\newcommand{\abs}[1]{\vert#1\vert}
\DeclareMathOperator{\trace}{trace}
\newcommand{\bq}{\begin{equation}}
\newcommand{\eq}{\end{equation}}
\newcommand{\R}{\mathbb{R}}
\newcommand{\Z}{\mathbb{Z}}
\newcommand{\Rn}{\R^n}
\newcommand{\bO}{\mathcal{O}}
\newcommand{\lam}{\lambda}
\newcommand{\sig}{\sigma_2}
\newcommand{\sigex}{\bar \sigma}
\DeclareMathOperator{\sort}{sort}
\DeclareMathOperator{\diag}{diag}
\newcommand{\x}{\textbf{x}}
\newcommand{\Dxx}{\mathcal{D}_{xx}}
\newcommand{\Dyy}{\mathcal{D}_{yy}}
\newcommand{\Dzz}{\mathcal{D}_{zz}}
\newcommand{\Dxy}{\mathcal{D}_{xy}}
\newcommand{\Dxz}{\mathcal{D}_{xz}}
\newcommand{\Dyz}{\mathcal{D}_{yz}}
\newcommand{\Dt}{\mathcal{D}}
\begin{document}

\title[Two-Hessian PDE]
{Numerical Methods for the 2-Hessian Elliptic Partial Differential Equation}

\author{Brittany D. Froese}
\thanks{Department of Mathematical Sciences, New Jersey Institute of Technology, University Heights, Newark, NJ 07102 USA ({\tt bdfroese@njit.edu})}

\author{Adam M. Oberman}
\thanks{Department of Mathematics and Statistics, McGill University, 805 Sherbrooke Street West, Montreal, Quebec, H3A 0G4, Canada ({\tt adam.oberman@mcgill.ca})}

\author{Tiago Salvador}
\thanks{Department of Mathematics and Statistics, McGill University, 805 Sherbrooke Street West, Montreal, Quebec, H3A 0G4, Canada ({\tt tiago.saldanhasalvador@mail.mcgill.ca}). This author partially supported by FCT doctoral grant SFRH / BD / 84041 /2012.}

\date{\today}

\subjclass[2010]{35J15, 35J25, 35J60, 65N06, 65N12, 65N22}

\keywords{
Fully Nonlinear Elliptic Partial Differential Equations, Hessian equation, Nonlinear Finite Difference Methods, Viscosity Solutions, Monotone Schemes, Ellipticity constraints
}

\begin{abstract}
The elliptic 2-Hessian equation is a fully nonlinear partial differential equation (PDE) that is related to intrinsic curvature for three dimensional manifolds.
We introduce two numerical methods for this PDE: the first is provably convergent to the viscosity solution, and the second is more accurate, and convergent in practice but lacks a proof.
The PDE is elliptic on a restricted set of functions:  a convexity type constraint is needed for the ellipticity of the PDE operator. Solutions with both discretizations are obtained using Newton's method. Computational results are presented on a number of exact solutions which range in regularity from smooth to nondifferentiable and in shape from convex to non convex.
\end{abstract}

\maketitle


\begin{figure}
\includegraphics[width=0.65\textwidth]{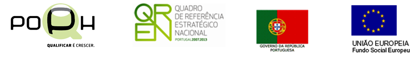}
\end{figure}

\tableofcontents

\section{Introduction}

In this article we study numerical approximations of a fully nonlinear elliptic partial differential equation (PDE), the $k$-Hessian equation.  The $k$-Hessian equations are a family of PDEs in $n$-dimensional space, which include the Laplace equation, when $k = 1$, and the Monge-Amp\`{e}re equation, when $k=n$.   
We have already studied the Dirichlet problem for the Monge-Amp\`{e}re equation \cite{ObermanFroeseMATheory,  ObermanFroeseFast,FroeseObermanFiltered}.   Here we study the first instance of this equation which is neither the Laplacian, or the  Monge-Amp\`{e}re equation, which is the 2-Hessian equation in three dimensions,
\bq\label{2Hessiandef}
S_2[u] =  u_{xx}u_{yy}+u_{xx}u_{zz}+u_{yy}u_{zz}-u_{xy}^2-u_{xz}^2-u_{yz}^2.
\eq

While the 2-Hessian equation is unfamiliar outside of Riemannian geometry and elliptic regularity theory, it is closely related to the  scalar curvature operator, which provides an intrinsic curvature for a three dimensional manifold.  
Geometric PDEs have been used widely in image analysis~\cite{sapiro2006geometric}. In particular, the Monge-Amp\`{e}re equation in the context of Optimal Transportation has been used in three dimensional volume based image registration~\cite{haker2004optimal}.  Scalar curvature equations have not yet been used in these contexts, perhaps because no effective solvers for PDEs involving this operator have yet been developed. The 2-Hessian operator also appears in conformal mapping problems. 
 Conformal surface mapping has been used for two dimensional image registration~\cite{angenent1999conformal,gu2004genus}, but does not generalize directly to three dimensions.  Quasi-conformal maps have been used in three dimensions~\cite{wang2007conformal,zeng2011registration}, however these methods are still being developed.   
 
In this article we introduce a monotone discretization of the $2$-Hessian equation in the three-dimensional case. A proof of convergence to the viscosity solution is provided. We also build a second order accurate finite difference solver which, while ustable if a simple iteration is used, can be modified to converge in practice.
 Numerical results are presented on solutions with varying regularity.
 
We focus on the Dirichlet problem
\[\begin{cases}
S_2[u] = f,& \quad \text{in } \Omega,\\
u = g, &\quad \text{on } \partial \Omega,
\end{cases}\]
where $\Omega$ is a rectangular (three dimensional box) domain, which is natural when treating computationally prescribed curvature problems. (For other topologies, different boundary conditions need to be used.  For the torus, periodic boundary conditions can be used. For the sphere, it is more complicated, but it is possible to patch together several cubic domains to obtain this topology.)

The operator is not elliptic, unless an additional constraint is imposed, which corresponds loosely
 to the requirement that the Laplacian restricted to every two-dimensional plane be positive.  This condition is explained in Proposition~\ref{prop2} and if we assume that $f > 0$, it reduces to
\[
\frac{d^2 u}{dv^2} +  \frac{d^2 u}{dw^2} \ge 0, \quad \text{ for every orthogonal triplet of vectors } (v,w,z).
\]
In other words, the two dimensional Laplacian restricted to every plane is positive for the function $u$. Hence the discretizations of the operator must also enforce the condition above. This means that either we are working with a family of inequality constraints, which makes the discretization very challenging, or that we need to find a way to encode the constraints in the PDE. We pursue the second option for the monotone discretization.

\subsection{Related work on curvature equations} 
The 2-Hessian equation is closely related to a  curvature PDE in three dimensions. 
In two dimensions there are several works on the evolution of curves using curvature, going back to the seminal paper of Osher and Sethian~\cite{Osher88frontspropagating}. In~\cite{ObermanMC}, a finite difference monotone scheme is given for the motion of level sets by mean curvature. The advantage of monotone discretizations is that they have a convergence proof, and convergent schemes are more stable and allow for faster solvers~\cite{Sethian95afast}. The surface evolver \cite{brakke1992surface} is a tool to evolve two dimensional surfaces by curvature based on the minimization of its energy. In \cite{sapiro2006geometric} one can find a relation between geometric PDEs and image analysis. For a review of the numerical methods for curvature flows see~\cite{ANU:298724}.

\subsection{Related work on the Monge-Amp\`{e}re equation}

In this paper we study a fully nonlinear elliptic PDE, while most of the curvature flows lead to quasilinear parabolic papers. Thus, we also review some of the related work on the Monge-Amp\`{e}re equation, a fully nonlinear elliptic PDE. For an extended review on numerical methods for fully nonlinear elliptic PDEs see \cite{FengNeilanReview}.

The Monge-Amp\`{e}re equation has been exhaustively studied. Consistent schemes using either finite elements \cite{NeilanQuadratic,NeilanFiniteElement} or finite differences \cite{MANewton} have been proposed. However, these schemes are not monotone and therefore do not fall within the convergence framework of Barles and Souganidis \cite{BSnum}. They require instead the PDE solution to be sufficiently smooth and the numerical solver to be well initialized. Using wide stencil discretizations, consistent monotone schemes were built \cite{ObermanFroeseMATheory,ObermanFroeseFast}, which are thus provably convergent but have limited accuracy due to their directional resolution. This limitation has been overcome recently. By introducing filtered schemes, which blend a monotone scheme with an accurate (but possibly unstable scheme), the authors in \cite{FroeseObermanFiltered} were able to obtain a provably convergent scheme with improved accuracy. Two other solutions, specific to particular dimensions, have been proposed as well: in the two dimensional setting using a mixture of finite differences and ideas from discrete geometry \cite{benamou2014monotone} and in the three dimensional setting using ideas from discretizations of optimal transport based on power diagrams \cite{Mirebeau3DMA}.



The Monge-Amp\`{e}re problem is related to the problem of prescribed Gauss curvature.  A numerical method for the problem of prescribed Gauss curvature can be found in \cite{merigot2014discrete}. The Gauss curvature flow is also used in image processing for surface fairing \cite{elsey2007analogue}.
There are very few publications devoted to solving it. In the early work of~\cite{sorensen2010quadratically} a quadratically constrained eigenvalue  minimization problem is solved to obtain the solution of the 2-Hessian equation. 
 
 \subsection{Scalar curvature and the 2-Hessian equation} 
The \emph{Gaussian curvature} of a two-dimensional surface is the product of the principal curvatures, $\kappa_1, \kappa_2$ of the surface.  It is an intrinsic quantity: it does not depend on the embedding of the surface in space.  Locally, the surface can be defined as the graph of a function $u(x)$, whose gradient of the function vanishes at $x$.  Then the Gaussian curvature at $x$ is given by the determinant of the Hessian of $u(x)$, 
\[
\det (D^2 u) = \kappa_1 \kappa_2,
\]
which is the two dimensional Monge-Amp\`{e}re operator applied to $u$ (if the gradient of $u$ does not vanish at $x$, additional first order terms appear).

The sign of the Gaussian curvature characterizes the surface,
and relates how the area of a geodesic ball in a curved Riemannian surface deviates from that of the standard ball in Euclidean space (larger or smaller depending on the sign).  The uniformization theorem of complex analysis establishes the fact that every surface has a conformal metric of constant Gaussian curvature: the sphere, the Euclidean plane, or hyperbolic space.   The uniformization theorem can be proved by several different methods. A natural method is one that solves a semi-linear Laplace equation  for the conformal map; see~\cite[Section 8]{mazzeo2002curvature}.

\emph{Curvature in three and higher dimensions} 
In general dimensions, curvature is a tensor rather than a scalar quantity.
The curvature tensor is defined by the sectional curvature, $K(p,x)$, which is given by the Gaussian curvature of the geodesic surface defined by the tangent plane, $p$, at x.
The scalar curvature (or the Ricci scalar),  which is the trace of the curvature tensor,  is the simplest curvature invariant of a Riemannian manifold.  It can be characterized as a multiple of the average of the sectional curvatures.  If we choose coordinates so that a three dimensional surface is given by the graph of a function $u(x)$ whose gradient vanishes at $x$, then the scalar curvature is given by a constant multiple of the 2-Hessian operator:
\[
\frac 1 2  \left ( \trace(D^2u )^2  - \trace\left((D^2u)^2\right) \right ) =  \kappa_1 \kappa_2  + \kappa_1 \kappa_3 + \kappa_2 \kappa_3
\]
where $\kappa_1, \kappa_2, \kappa_3$ are the three principal curvatures.  
Again, if the gradient of $u$ does not vanish at $x$, additional first order terms appear.  However  the equation above holds in general if we replace the principal curvatures with the eigenvalues of the Hessian.  This leads to the 2-Hessian equation; see \autoref{sec:2hessian} below.   

Since the second order terms pose the primary challenge in the solution of nonlinear elliptic equations, we focus on the 2-Hessian equation in this work. In a similar way, the Monge-Amp\`{e}re equation can be related to the equation for Gauss curvature through the inclusion of appropriate first order terms.  In~\cite{Benamou2014} we studied an extension of the Monge-Amp\`{e}re equation with first order nonlinear terms; in that case the primary challenge was the boundary conditions.

\subsection{{Differential geometry and $k$-Hessian equations}}
Conformal changes of metric (multiplication of the metric by a positive function) have played an important role in surface theory~\cite{lee1987yamabe}.  

One of the foundational problems of Riemannian differential geometry is to generalize the uniformization theorem for surfaces to higher dimensions. The generalization of the uniformization theorem for surfaces to higher dimensional manifolds involves replacing constant Gauss curvature (which is a scalar in two dimensions) with constant scalar curvature (rather than constant tensor curvature).  The resulting problem is called, 

\textbf{The Yamabe Problem} \textit{Given a compact Riemannian manifold $(M, g)$ of dimension $n \ge 3$, find a metric conformal to $g$ with constant scalar curvature.}

The solution of the Yamabe problem can be obtained by solving a nonlinear elliptic eigenvalue problem ~\cite{trudinger1968remarks}.  Generalizations of the Yamabe problem to other curvatures result in  $k$-Hessian type equations~\cite{viaclovsky2000some, viaclovsky1999conformal}.

Also of interest is

\textbf{The Calabi-Yau problem}~\cite{gross2003calabi} \textit{Find a conformal mapping, given by $U(x)$, which transforms a given metric $g_{ij}$ to a new one $\tilde g_{ij}$ given by }
\[
\tilde g_{ij} = \exp(U) g_{ij}.
\]

The conformal mapping function $U(x)$ satisfies a real Monge-Amp\`{e}re type PDE~\cite{MR480350}.
In certain settings (for example, the quaterionic setting), the Calabi-Yau problem for a manifold which is even ($d = 2n$) dimensional, results in a  $k$-Hessian type equation with $k = d/2$~\cite{alesker2010quaternionic}.

Another interesting problem where the $k$-Hessian equation appears is

\textbf{The Christoffel-Minkowski Problem} \textit{Find a convex hypersurface with the $k$-th symmetric function of the principal radii prescribed on its outer normals.}

It turns out that the solution of the Christoffel-Minkowski problem corresponds to finding convex solutions of a $k$-Hessian equation on the $n$-sphere \cite{Guan:2003aa}.

The 2-Hessian equation corresponds to scalar curvature, as we discuss above,  and solving the 2-Hessian PDE (or a related one) allows for the construction of hyper-surfaces of prescribed curvatures, for example scalar curvature \cite{guan2002convex}.

Also related are the problem of local isometric embedding of Riemannian surfaces in $\R^3$ and the related Weyl problem \cite{trudinger2008monge}.


\section{Background on the equation}

In this section,  we present the background analysis for the $k$-Hessian equation, with particular focus on the 2-Hessian equation in the three dimensional case. We follow the review by Wang \cite{Wang}.

The $k$-Hessian equation can be written as
\[S_k[u] = f\]
where $1\le k \le n$, $S_k[u] = \sigma_k(\lam(D^2u))$, $\lam(D^2u) = (\lam_1,\dots, \lam_n)$ are the eigenvalues of the Hessian matrix $D^2u$ and
\[
\sigma_k(\lam) = \sum_{i_1 < \dots < i_k} \lam_{i_1} \dots \lam_{i_k}
\]
is the $k$-th elementary symmetric polynomial. It includes the Poisson equation $(k=1)$
\[\Delta u = f,\]
and the Monge-Amp\`{e}re equation $(k=n)$
\[\det D^2u = f,\]
as particular cases.

The Dirichlet problem is given by
\bq\label{kHessian}\tag{kH}\begin{cases}
S_k[u] = f,& \quad \text{in } \Omega,\\
u = g, &\quad \text{on } \partial \Omega.
\end{cases}
\eq

\subsubsection*{Admissible functions and ellipticity}

When $k$ is even, the $k$-Hessian equation lacks uniqueness: if $u$ solves the $k-$Hessian equation, so does $-u$. Thus an additional condition is needed to ensure solution uniqueness. Moreover, when studying the Poisson equation it is customary to focus on the case $f \geq 0$, which is equivalent to look for solutions that are subharmonic since as a result a maximum principle holds. In the case of the Monge-Amp\`{e}re equation, we impose instead the additional constraint that $u$ is convex, which is required for the ellipticity of the equation. In either cases, it is thus necessary to restrict the solutions to an appropriate class of functions in order to ensure the equation has interesting properties.

Set
\[
\Gamma_k = \left\{\lambda \in \Rn \mid \sigma_j(\lambda) > 0, j=1,\ldots,k\right\}.
\]
$\Gamma_k$ is a symmetric cone, meaning that any permutation of $\lambda$ is in $\Gamma_k$. When $k=1$, $\Gamma_1$ is the half space $\left\{\lambda \in \Rn \mid \lambda_1+\ldots+\lambda_n > 0\right\}$. When $k=n$, $\Gamma_n$ is the positive cone $\Gamma_n = \left\{\lambda \in \Rn \mid \lambda_j > 0, j=1,\ldots,n\right\}$. The result is a restriction to subharmonic functions for $k=1$ and convex functions for $k=n$, as mentioned above.

\begin{definition}
A function $u \in C^2$ is $k-$admissible if $\lambda(D^2u) \in \overline{\Gamma_k}$.
\end{definition}

\begin{proposition}
If $u$ is $k-$admissible then the $k-$Hessian equation \eqref{kHessian} is (degenerate) elliptic.
\end{proposition}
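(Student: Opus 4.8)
The plan is to show that for a $k$-admissible function $u$, the linearized operator $DS_k[u]$ is a positive semi-definite coefficient matrix acting on $D^2$, which is precisely the statement of (degenerate) ellipticity. First I would recall that $S_k[u] = \sigma_k(\lambda(D^2 u))$ and that $\sigma_k$, as a function of a symmetric matrix $M$ via $\sigma_k(\lambda(M))$, is smooth on symmetric matrices with $\partial \sigma_k / \partial M_{ij}$ forming a symmetric matrix; write $T_{k-1}(M)$ for this derivative (the $k$-th Newton tensor / cofactor-type matrix). Ellipticity of the PDE $S_k[u]=f$ at $u$ means exactly that $T_{k-1}(D^2 u) \ge 0$ as a matrix, since then perturbing $D^2 u$ by a positive semi-definite matrix $P$ gives, to first order, an increase $\langle T_{k-1}(D^2 u), P\rangle \ge 0$.

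The key algebraic fact I would invoke is the identity, valid when $M$ is diagonalized with eigenvalues $\lambda = (\lambda_1,\dots,\lambda_n)$, that $T_{k-1}(M)$ is simultaneously diagonalized with eigenvalues $\sigma_{k-1}(\lambda \mid i) := \partial \sigma_k(\lambda)/\partial \lambda_i$, i.e. the $(k-1)$-th elementary symmetric polynomial in the variables $\lambda$ omitting $\lambda_i$. So the matrix $T_{k-1}(D^2 u)$ is positive semi-definite if and only if $\sigma_{k-1}(\lambda \mid i) \ge 0$ for every $i$. The crucial step is therefore: if $\lambda \in \overline{\Gamma_k}$, then $\sigma_{k-1}(\lambda \mid i) \ge 0$ for all $i$. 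I would prove this using the standard fact that $\Gamma_k$ is a convex cone (the Gårding cone of $\sigma_k$) and that along the boundary the conditions $\sigma_j(\lambda) \ge 0$ for $j \le k$ persist by continuity; combined with the Newton–Maclaurin inequalities $\sigma_{j-1}(\lambda)\sigma_{j+1}(\lambda) \le c_j \sigma_j(\lambda)^2$ and the monotonicity property of $\Gamma_k$ (that $\lambda \in \Gamma_k$ implies $(\lambda\mid i)\in\Gamma_{k-1}\subset\R^{n-1}$), one gets $\sigma_{k-1}(\lambda\mid i) \ge 0$.

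The main obstacle is precisely the last implication — controlling the sign of $\sigma_{k-1}(\lambda \mid i)$ on the \emph{closure} $\overline{\Gamma_k}$. On the open cone $\Gamma_k$ one has strict positivity and the argument is clean; on the boundary one must be careful because $\sigma_{k-1}(\lambda)$ itself need not be positive there, yet the restricted quantities $\sigma_{k-1}(\lambda\mid i)$ remain nonnegative. I would handle this either by the classical result of Gårding on hyperbolic polynomials (so that $\Gamma_k$ is the connected component of positivity and the relevant partial derivatives inherit the cone structure), or by a density/continuity argument approximating boundary points from inside $\Gamma_k$. Once $T_{k-1}(D^2 u)\ge 0$ is established, ellipticity follows immediately from the definition: for symmetric matrices $A \le B$, writing $B = A + P$ with $P \ge 0$ and using that $s\mapsto S_k[\,\cdot\,]$ evaluated along $A+sP$ has nonnegative derivative $\langle T_{k-1}(A+sP),P\rangle$, one concludes $S_k(A)\le S_k(B)$, which is monotonicity of the operator in $D^2 u$, i.e. degenerate ellipticity. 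I would remark that when $k=1$ this recovers $\Delta u = f$ being (uniformly) elliptic with no constraint, and when $k=n$ it recovers that convexity ($\lambda \ge 0$) is exactly what makes $\det D^2 u = f$ elliptic, consistent with the discussion preceding the statement.
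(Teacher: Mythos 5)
Your proposal is a correct outline of the standard general-$k$ argument, but it is worth noting that the paper does not actually prove this proposition at all: it is quoted from Wang's review without proof, and the only ellipticity argument the paper carries out is for the special case it needs ($k=2$, $n=3$), in the later lemma on the linearization of $S_2$. There the authors diagonalize $D^2u$ and observe that the coefficient matrix of the linearized operator is $\diag(\lambda_2+\lambda_3,\,\lambda_1+\lambda_3,\,\lambda_1+\lambda_2)$, which is positive semi-definite on $\overline{\Gamma_2}$ because Proposition~\ref{prop2} identifies $\Gamma_2$ inside the cone $\{\lambda_i+\lambda_j>0 \text{ for all } i\neq j\}$; this is exactly your claim $\sigma_{k-1}(\lambda\mid i)\ge 0$ specialized to $k=2$, $n=3$, where it becomes completely elementary. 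Your route via the Newton tensor $T_{k-1}$ is the right one for general $k$ and buys the full statement, but the crucial step is only sketched, and one of your two suggested arguments is essentially circular as written: the ``monotonicity property'' that $\lambda\in\Gamma_k$ implies $(\lambda\mid i)\in\Gamma_{k-1}$ already contains the conclusion $\sigma_{k-1}(\lambda\mid i)>0$ as one of its defining inequalities, so invoking it together with Newton--Maclaurin does not constitute a proof. The G\aa rding hyperbolic-polynomial route you mention is the one that actually closes this step (positivity of the partial derivatives of a hyperbolic polynomial on its G\aa rding cone), with the extension to $\overline{\Gamma_k}$ by continuity; if you want a self-contained proof you should carry that argument out, or, for the case the paper uses, simply reduce to the pairwise-sum computation above.
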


\begin{remark}
We allow the eigenvalues of $u$ to lie in the boundary of $\Gamma_k$ and in such case the $k-$Hessian equation may become degenerate elliptic.
\end{remark}

\subsubsection*{Viscosity Solutions}\label{sec:viscosity}

Well-posedness and regularity for the equation is studied in \cite{caffarelli1985dirichlet}. Here we start by recalling a well posedness result.

\begin{definition}
We say that $\Omega \subseteq \R^n$ is $(k-1)$-convex if it satisfies
\[\sigma_{k-1}(\kappa) \geq c_0 > 0 \quad \text{ on } \partial \Omega\]
for some positive constant $c_0$ where $\kappa = (\kappa_1,\ldots,\kappa_{n-1})$ denote the principal curvatures of $\partial \Omega$ with respect to its inner normal.
\end{definition}

\begin{theorem}\label{wellposedness}
Assume that $\Omega$ is a bounded $(k-1)$-convex domain in $\R^n$ with $C^{3,1}$ boundary $\partial \Omega$, $g \in C^{3,1}\left(\partial \Omega\right)$ and $f \in C^{1,1}\left(\overline{\Omega}\right)$ with $f \geq f_0 > 0$. Then there is a unique $k$-admissible solution $u \in C^{3,\alpha}\left(\overline{\Omega}\right)$ to the Dirichlet problem \eqref{kHessian} for some $\alpha \in (0,1)$.
\end{theorem}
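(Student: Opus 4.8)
The plan is to establish existence by the method of continuity and uniqueness by a comparison principle, following the program of Caffarelli--Nirenberg--Spruck for equations in the eigenvalues of the Hessian (see \cite{caffarelli1985dirichlet}). First I would connect the given data to a trivial problem by a path $S_k[u] = f_t$ in $\Omega$, $u = g_t$ on $\partial\Omega$, $t \in [0,1]$, chosen so that the $t=0$ problem has an explicit $k$-admissible solution and so that along the path $f_t \ge f_0 > 0$ and the regularity of $f_t, g_t, \partial\Omega$ is preserved. The set of $t$ for which a $k$-admissible solution in $C^{2,\alpha}(\overline{\Omega})$ exists is open by the implicit function theorem in Hölder spaces: the linearization is $L = \sum_{ij} \frac{\partial \sigma_k}{\partial r_{ij}}(D^2u)\,\partial_{ij}$, which is uniformly elliptic whenever $u$ is admissible with $\sigma_k(\lambda(D^2u))$ bounded below, and the hypothesis $f \ge f_0 > 0$ together with the Maclaurin/Newton inequalities on $\Gamma_k$ forces all of $\sigma_1,\dots,\sigma_k$ to stay away from zero. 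The content of the proof is then to show the set is closed, which reduces to a priori estimates in $C^{2,\alpha}(\overline{\Omega})$ that are uniform in $t$.

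Second, I would build the estimates in the usual hierarchy. The $C^0$ bound follows from comparison with barriers: an upper barrier is any fixed admissible supersolution matching $g$ on $\partial\Omega$, a lower barrier is a large multiple of a defining function of $\Omega$, using $f \ge f_0$ to dominate. The boundary gradient estimate uses barriers of the form ``$g$ extended plus a concave function of the distance to $\partial\Omega$''; admissibility of such a barrier is exactly where the $(k-1)$-convexity of $\partial\Omega$ enters. The interior gradient estimate comes from a Bernstein-type argument exploiting the convexity-like structure of admissible functions. For the second derivatives, a global bound $\sup_\Omega |D^2 u| \le C\bigl(1 + \sup_{\partial\Omega}|D^2u|\bigr)$ is obtained by differentiating the equation twice and using the concavity of $r \mapsto \sigma_k(\lambda(r))^{1/k}$ on the cone to absorb the unfavorable terms, followed by the maximum principle applied to an auxiliary function.

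Third --- and this is the step I expect to be the main obstacle --- I would bound $|D^2u|$ on $\partial\Omega$ by splitting, at each boundary point, into tangential--tangential, tangential--normal, and normal--normal parts. Differentiating $u = g$ twice along $\partial\Omega$ controls the purely tangential part in terms of $g$, $\grad u$, and the curvature of $\partial\Omega$; the mixed part is controlled by applying the maximum principle to a tangential derivative of $u$ plus a suitable barrier. The delicate piece is the pure second normal derivative $u_{\nu\nu}$: one bounds it using the equation $\sigma_k(\lambda(D^2u)) = f$ itself, expanded in terms of the already-controlled components, where the $(k-1)$-convexity $\sigma_{k-1}(\kappa) \ge c_0 > 0$ on $\partial\Omega$ is what keeps the coefficient of $u_{\nu\nu}$ bounded below so that it can be solved for. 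With a uniform $C^2(\overline{\Omega})$ bound in hand, the operator is uniformly elliptic on the relevant range of Hessians and concave in $D^2u$, so the Evans--Krylov theorem (and its boundary version) gives the uniform $C^{2,\alpha}(\overline{\Omega})$ estimate that closes the continuity method; linear Schauder theory applied to $Lu$ then bootstraps $u$ to $C^{3,\alpha}(\overline{\Omega})$ given $f \in C^{1,1}$, $g \in C^{3,1}$, $\partial\Omega \in C^{3,1}$. Uniqueness is separate and easier: for two $k$-admissible solutions $u_1, u_2$, the segment $u_s = (1-s)u_1 + su_2$ stays admissible by convexity of $\Gamma_k$, and $w = u_1 - u_2$ satisfies a linear elliptic equation with no zeroth-order term and $w = 0$ on $\partial\Omega$, hence $w \equiv 0$ by the maximum principle.
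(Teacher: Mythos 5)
The paper does not prove this theorem at all---it is recalled verbatim from Caffarelli--Nirenberg--Spruck \cite{caffarelli1985dirichlet}---and your outline is precisely the continuity-method argument of that reference: openness via the implicit function theorem in H\"older spaces, closedness via the hierarchy of $C^0$, $C^1$, global and boundary $C^2$ a priori estimates (with the $(k-1)$-convexity of $\partial\Omega$ entering exactly where you place it, in the boundary barriers and the bound on $u_{\nu\nu}$), Evans--Krylov plus Schauder bootstrapping for $C^{3,\alpha}$, and uniqueness from the convexity of the cone $\Gamma_k$ together with the maximum principle. So your approach matches the source's; the only caveat is that what you have written is a roadmap rather than a proof---the substantive analytic content lives in the estimates you defer, above all the double-normal second derivative bound at the boundary.
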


We now recall the definition of viscosity solutions.

\begin{definition}\label{def:visc}
A function $u \in USC\left(\overline{\Omega}\right)$ is a viscosity subsolution of \eqref{kHessian} if for every $\phi \in C^2\left(\overline{\Omega}\right) \cap \overline{\Gamma_k}$, whenever, $u-\phi$ has a local maximum at $x\in \overline{\Omega}$ then
\[\begin{cases}
\sigma_k(\lambda(D^2\phi(x))) \leq f	,				& \text{if } x \in \Omega,\\
\min\left(\sigma_k(\lambda(D^2\phi(x)))-f,u-g\right) \leq 0,	& \text{if } x \in \partial \Omega.
\end{cases}\]
Similarly, a function $u \in LSC\left(\overline{\Omega}\right)$ is a viscosity supersolution of \eqref{kHessian} if for every $\phi \in C^2\left(\overline{\Omega}\right) \cap \overline{\Gamma_k}$, whenever, $u-\phi$ has a local minimum at $x\in \overline{\Omega}$ then
\[\begin{cases}
\sigma_k(\lambda(D^2\phi(x))) \geq f	,					& \text{if } x \in \Omega,\\
\max\left(\sigma_k(\lambda(D^2\phi(x)))-f,u-g\right) \geq 0,	& \text{if } x \in \partial \Omega.
\end{cases}\]
Finally, we call $u$ a viscosity solution  of \eqref{kHessian} if $u^*$ is a viscosity subsolution and $u_*$ is a viscosity supersolution  of \eqref{kHessian}.
\end{definition}



The equations we consider satisfy a comparison principle.
\begin{align}\tag{CP}\label{CP}
\begin{aligned}
& \text{Suppose \eqref{kHessian} has a (continuous) viscosity solution. If } u \in USC\left(\overline{\Omega}\right) \text{ is a}\\
& \text{subsolution and } v \in LSC\left(\overline{\Omega}\right) \text{ is a supersolution of \eqref{kHessian}, then } u \leq v \text{ on } \overline{\Omega}.
\end{aligned}
\end{align}
The proof of this result is one of the main technical arguments in the viscosity solutions theory \cite{CIL}.

We remark that Definition~\ref{def:visc} allows for discontinuous viscosity solutions.  However, the comparison principle~(\ref{CP}) does not hold in this setting.  The theoretical details of discontinuous viscosity solutions are not well established, and are well beyond the scope of the present article.


\subsubsection*{$2$-Hessian equation}

In this paper, we focus on the the three-dimensional case with $k=2$
\[S_2[u] = f\]
where
\bq\label{sig2defn}
S_2[u] = \sig(\lam) = \lam_1 \lam_2 + \lam_1 \lam_3 + \lam_2 \lam_3.
\eq
The Dirichlet problem given by
\bq\label{2Hessian}\tag{2H}\begin{cases}
S_2[u] = f,& \quad \text{in } \Omega,\\
u = g, &\quad \text{on } \partial \Omega.
\end{cases}
\eq

\subsubsection*{Alternative description of $\Gamma_2$}

We have
\[
\Gamma_2 = \left\{\lambda \in \R^3 \mid \lam_1+\lam_2+\lam_3 > 0,~ \sig(\lam) > 0\right\}.
\]
The following Proposition provides an alternative characterization of $\Gamma_2$.
\begin{proposition}\label{prop2}
Let
\bq\label{gammadefn}
\Gamma = 
\left\{   
\lam \in \R^3  \mid \lam_1 + \lam_2 > 0, ~\lam_1 + \lam_3 > 0,~ \lam_2 + \lam_3 > 0
\right \}
\eq
Then
\[\Gamma_2  = \Gamma \cap \{\lam \in \R^3 \mid \sig(\lambda) > 0\}.\]
\end{proposition}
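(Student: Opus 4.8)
The plan is to exploit that both sides of the claimed identity already carry the constraint $\sig(\lam)>0$, so that only the two \emph{linear} sign conditions need to be compared: it suffices to show that, whenever $\sig(\lam)>0$, one has $\lam_1+\lam_2+\lam_3>0$ if and only if all three pairwise sums $\lam_1+\lam_2$, $\lam_1+\lam_3$, $\lam_2+\lam_3$ are positive. One direction is immediate and does not even use $\sig(\lam)>0$: adding the three inequalities $\lam_i+\lam_j>0$ gives $2(\lam_1+\lam_2+\lam_3)>0$, which establishes $\Gamma\cap\{\sig(\lam)>0\}\subseteq\Gamma_2$.

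For the reverse inclusion $\Gamma_2\subseteq\Gamma\cap\{\sig(\lam)>0\}$, suppose $\lam_1+\lam_2+\lam_3>0$ and $\sig(\lam)>0$; I must check that each pairwise sum is positive, and since the argument below applies verbatim to each of the three pairs it is enough to treat one, say $s:=\lam_2+\lam_3$. I would argue by contradiction: assume $s\le 0$ and set $p:=\lam_2\lam_3$. From $\lam_1+s=\lam_1+\lam_2+\lam_3>0$ we get $\lam_1>-s\ge 0$, and multiplying this by $s\le 0$ (which reverses the inequality) gives $\lam_1 s\le -s^2$. Since $\lam_2,\lam_3$ are real, the quadratic $t^2-st+p$ has nonnegative discriminant, i.e.\ $p\le s^2/4$. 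Writing $\sig(\lam)=\lam_1(\lam_2+\lam_3)+\lam_2\lam_3=\lam_1 s+p$ and combining the two bounds,
\[
\sig(\lam)=\lam_1 s+p\le -s^2+s^2/4=-3s^2/4\le 0,
\]
contradicting $\sig(\lam)>0$. Hence $s>0$; applying this to all three pairs shows $\lam\in\Gamma$, and together with the hypothesis $\sig(\lam)>0$ this gives $\lam\in\Gamma\cap\{\sig(\lam)>0\}$.

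I do not anticipate a genuine obstacle: the computation is elementary once one observes that the common factor $\{\sig(\lam)>0\}$ eliminates the quadratic condition from the comparison, leaving only the two affine constraints. The only points deserving a moment's care are verifying that $\Gamma$, $\Gamma_2$, and $\{\sig(\lam)>0\}$ are all invariant under permutations of $(\lam_1,\lam_2,\lam_3)$ (so that restricting attention to a single pair is legitimate), and the degenerate endpoint $s=0$, which is still covered because then $p=-\lam_2^2\le 0$ and the displayed estimate again forces $\sig(\lam)\le 0$.
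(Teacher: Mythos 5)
Your proof is correct. Both you and the paper argue the nontrivial inclusion $\Gamma_2\subseteq\Gamma\cap\{\sig(\lam)>0\}$ by contradiction from the assumption that some pairwise sum is nonpositive, so the overall strategy is the same; the difference is in the algebraic execution. The paper first sorts $\lam_1\le\lam_2\le\lam_3$ so that only the smallest sum $\lam_1+\lam_2$ needs checking, and then splits into the cases $\lam_1+\lam_2=0$ and $\lam_1+\lam_2<0$, the latter requiring a division by $\lam_1+\lam_2$ and a comparison of two bounds on $\lam_3$. You instead work with an arbitrary pair by symmetry and replace the case split with the single estimate $\lam_2\lam_3\le(\lam_2+\lam_3)^2/4$ (nonnegativity of the discriminant), which combines with $\lam_1 s\le -s^2$ to give $\sig(\lam)\le -3s^2/4\le 0$ uniformly, including the endpoint $s=0$. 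Your version is slightly cleaner in that it avoids both the sorting step and the division by a quantity of uncertain sign; the paper's version is more pedestrian but makes the geometry of the two cases explicit. Both are complete and correct.
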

\begin{proof}
Proving the $\supseteq$ part is straightforward. We then prove the inclusion $\subseteq$. Suppose that $(\lam_1,\lam_2,\lam_3) \in \Gamma_2$. Without loss of generality we can assume that $\lam_1 \leq \lam_2 \leq \lam_3$. Thus, it is sufficient to show that $\lam_1+\lam_2 > 0$. Suppose that $\lam_1+\lam_2 \leq 0$. We consider two cases, each leading to a contradiction.
\begin{itemize}
	\item $\lam_1+\lam_2 = 0$
\end{itemize}

We have $\lam_1 \lam_2 \leq 0$. Hence
\begin{align*}
\sig(\lam)	& = \lam_1 \lam_2 + \lam_1 \lam_3 + \lam_2 \lam_3\\
		& = \lam_1 \lam_2 + (\lam_1 + \lam_2) \lam_3\\
		& = \lam_1 \lam_2\\
		& \leq 0,
\end{align*}
contradicting the assumption $\sig(\lam) > 0$.
\begin{itemize}
	\item $\lam_1+\lam_2 < 0$
\end{itemize}

Since $\lam_1 \leq \lam_2$, we have $\lam_1 < 0$. Moreover
\[\sig(\lam) > 0 \Longleftrightarrow \lam_3(\lam_1+\lam_2) > -\lam_1\lam_2 \Longleftrightarrow \lam_3 < -\frac{\lam_1\lam_2}{\lam_1+\lam_2}\]
and
\[\lam_1+\lam_2+\lam_3 > 0 \Longleftrightarrow \lam_3 > -\lam_1-\lam_2\]
From the above two inequalities we get
\[-\lam_1-\lam_2 < -\frac{\lam_1\lam_2}{\lam_1+\lam_2}\]
which we can rewrite as
\[\lam_1 (\lam_1 + \lam_2) + \lam_2^2 < 0.\]
Now, since $\lam_1 < 0$ and $\lam_1 + \lam_2 < 0$, the left-end side of the inequality must be positive and we have thus derived a contradiction.
\end{proof}

It is easy to show, using differentiation, that the function $\sig$ is nondecreasing on the set $\Gamma$, which gives some insight to why the set of admissible functions is the set of functions where $S_2$ is elliptic.

The constraint $\sig(\lam) \geq 0$ will be enforced automatically in our schemes by taking a non-negative $f$ in the PDE \eqref{2Hessian}. Therefore it is sufficient to look at the set $\Gamma$ as defined in \eqref{gammadefn}. We will refer to this restriction as \emph{plane-subharmonic} since it corresponds to $u$ being subharmonic on every plane.

\subsubsection*{Alternative description  of the $2$-Hessian operator}\label{sec:2hessian}
 
For a $3\times 3$ matrix $M$, the characteristic polynomial is given by
\[
\det (M) - c(M) \lam +  \trace (M)\lam^2 - \lam^3
\]
where $c(M)$, the sum of the principal minors of $M$, is given by
\bq\label{DiagM}
c(M) = \frac 1 2  \left ( \trace(M)^2  - \trace(M^2) \right ).
\eq
If $\lam_1$, $\lam_2$ and $\lam_3$ are the eigenvalues of $M$ then
\[c(M) =  \lam_1 \lam_2 + \lam_1 \lam_3 + \lam_2 \lam_3.\]
Therefore, using \eqref{sig2defn}, we conclude that \eqref{2Hessiandef} holds,
\[
S_2[u] = c\left(D^2u\right) = u_{xx}u_{yy}+u_{xx}u_{zz}+u_{yy}u_{zz}-u_{xy}^2-u_{xz}^2-u_{yz}^2.
\]

\subsubsection*{Linearization}

The linearization of $c(M)$ defined in \eqref{DiagM}, is given by:
\[\nabla c(M) \cdot N = \trace(M)\trace(N) - \trace(MN).\]

We can apply the linearization of $c(M)$ to obtain the linearization of the $2$-Hessian operator, $S_2[u]$, for $u\in C^2$, 
\bq\label{lin2Hessian}
\nabla S_2[u]\cdot\nu = \trace(D^2u)\trace(D^2\nu) - \trace(D^2uD^2\nu).
\eq

\begin{lemma}
Let $u \in C^2$. The linearization of the $2-$Hessian operator \eqref{lin2Hessian} is elliptic if $u$ is $2$-admissible.
\end{lemma}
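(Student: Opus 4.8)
The plan is to put the linearized operator into the form $\nabla S_2[u]\cdot\nu = \trace\!\big(M\,D^2\nu\big)$ for a symmetric matrix $M$ that depends only on $D^2u$, and then to show that $2$-admissibility of $u$ forces $M$ to be positive semidefinite. For a linear second-order operator whose action on $D^2\nu$ is $X\mapsto\trace(MX)$ with $M$ symmetric, degenerate ellipticity (monotonicity in the Hessian argument) is equivalent to $M\succeq 0$: if $X\le Y$ then $Y-X\succeq 0$, and $\trace\!\big(M(Y-X)\big)\ge 0$ for all such $X,Y$ exactly when $M\succeq 0$ (testing on rank-one matrices gives the converse).

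First I would set $A=D^2u$ and use linearity and cyclicity of the trace to rewrite \eqref{lin2Hessian} as
\[
\nabla S_2[u]\cdot\nu = \trace(A)\trace(D^2\nu) - \trace(A\,D^2\nu) = \trace\!\big((\trace(A)\,I - A)\,D^2\nu\big),
\]
so that $M = \trace(A)\,I - A$. Since $A$ is symmetric, so is $M$.

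Next I would compute the spectrum of $M$. If $\lam_1,\lam_2,\lam_3$ are the eigenvalues of $A=D^2u$, then $M=\trace(A)I-A$ is diagonalized by the same orthonormal basis with eigenvalues $\trace(A)-\lam_i=\sum_{j\ne i}\lam_j$, that is $\lam_2+\lam_3$, $\lam_1+\lam_3$, and $\lam_1+\lam_2$. Hence $M\succeq 0$ if and only if all three pairwise sums $\lam_i+\lam_j$ are nonnegative, which is precisely the (closed) condition $\lam(D^2u)\in\overline{\Gamma}$ with $\Gamma$ as in \eqref{gammadefn}.

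Finally I would invoke Proposition~\ref{prop2}: since $\Gamma_2=\Gamma\cap\{\sig(\lam)>0\}\subseteq\Gamma$, taking closures gives $\overline{\Gamma_2}\subseteq\overline{\Gamma}$, so $2$-admissibility ($\lam(D^2u)\in\overline{\Gamma_2}$) yields $\lam_i+\lam_j\ge 0$ for every pair, hence $M\succeq 0$ and the linearized operator is degenerate elliptic. I do not expect a genuine obstacle here; the only points that require a little care are the bookkeeping with closures — the eigenvalues of $D^2u$ may lie on $\partial\Gamma_2$, so one obtains $\ge 0$ rather than $>0$, matching the word "degenerate" — and making explicit that ellipticity of a linear operator of this type is precisely positive semidefiniteness of its coefficient matrix $M$.
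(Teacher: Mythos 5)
Your proof is correct and follows essentially the same route as the paper: both identify the coefficient matrix of the linearization as $\trace(D^2u)I - D^2u$ (the paper writes it in diagonalizing coordinates as $\diag(\lambda_2+\lambda_3,\lambda_1+\lambda_3,\lambda_1+\lambda_2)$) and conclude positivity from $2$-admissibility via the pairwise-sum characterization of $\Gamma$. Your treatment is in fact slightly more careful than the paper's, which asserts positive definiteness where admissibility on $\overline{\Gamma_2}$ only guarantees semidefiniteness --- a point the paper itself defers to the remark following the lemma.
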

\begin{proof}
Without loss of generality, we choose coordinates such that $D^2u(x)$ is diagonal. We can then rewrite the linearization of the $2$-Hessian operator as
\[\nabla S_2[u]\cdot\nu = \trace(A D^2\nu)\]
where $A = \diag(\lambda_2+\lambda_3,\lambda_1+\lambda_3,\lambda_1+\lambda_2)$. Hence, the linearization is elliptic if $A$ is positive definite, which is true if $u$ is $2$-admissible.
(It also follows directly from the definition of nonlinear elliptic operator (in the sense of \cite{CIL}) that the linearization is elliptic.)
\end{proof}

\begin{remark}
When the function $u$ fails to be ``strictly'' $2$-admissible, the linearization can be degenerate elliptic, which affects the conditioning of the linear system \eqref{lin2Hessian}. When $u$ is not $2$-admissible, the linear system can be unstable.
\end{remark}

\section{Discretization and solvers} 
In this section we explain why the naive finite difference method fails in general. We introduce explicit, semi-implicit, and Newton solvers for the naive finite difference method, which perform better by enforcing the plane-subharmonic constraint. This is similar to the solvers used in~\cite{BenamouFroeseObermanMA} for the Monge-Amp\`{e}re equation. Then we introduce a discretization which is monotone and thus provably convergent.

While the monotone discretization is less accurate, it has the advantage that it gives a globally consistent, monotone  discretization of the operator, meaning that we can apply the operator to non-admissible functions.   This is useful because it circumvents the need for special initial data, and allows for the parabolic (time-dependent) operator to be defined on an unconstrained class of functions.

In addition, we could combine the monotone discretization with the naive finite difference discretization to obtain provably convergent, accurate filtered finite difference schemes, using the ideas in \cite{FroeseObermanFiltered}.  This approach combines the advantages of both schemes, with little additional effort.   In this work, we were mainly interested in comparing the performance of the two schemes, so we did not implement the filtered scheme.

\subsection{Naive finite difference scheme}

We begin by discussing the naive finite difference discretization of the $2$-Hessian. This is done by simply using standard finite differences to discretize the operator. Denote by $D^{2,h}u$ the discretized Hessian using standard finite differences on a uniform grid with grid spacing $h$, i.e.,
\[
D^{2,h} u_{ijk} = \left[\begin{array}{ccc}
\Dxx u_{ijk} & \Dxy u_{ijk} & \Dxz u_{ijk} \\
\Dxy u_{ijk} & \Dyy u_{ijk} & \Dyz u_{ijk} \\
\Dxz u_{ijk} & \Dyz u_{ijk} & \Dzz u_{ijk} \end{array}\right],
\]
where, e.g.,
\begin{align*}
\Dt_{xx}u_{ijk} &= \frac{u_{i,j+1,k}-2u_{i,j,k}+u_{i,j-1,k}}{h^2},\\
\Dt_{xy}u_{ijk} &= \frac{u_{i+1,j+1,k}+u_{i-1,j-1,k}-u_{i-1,j+1,k}-u_{i+1,j-1,k}}{4h^2}.
\end{align*}

We then get the discrete version of the 2-Hessian operator $S_2[u]$ as
\bq\label{naivediscrete2Hessian}
S_2^A[u] = c\left(D^{2,h}u\right)
\eq
Since we are using centered finite differences,  this discretization is consistent, and it is second order accurate if the solution is smooth (hence the superscript $A$). However, this scheme is not monotone due to the off-diagonal terms in the cross derivatives $u_{xy}$, $u_{xz}$ and $u_{yz}$. Therefore the Barles and Souganidis theory~\cite{BSnum} does not apply and no convergence proof is available.

\subsection{Failure of the parabolic solver for the naive finite differences}
In this section we give a simple example to illustrate that the use of the naive finite difference scheme \eqref{naivediscrete2Hessian} together with a parabolic solver fails to converge.

The parabolic solver is given by
\bq\label{naiveparabolic}
u^{n+1} = u^n+dt(S_2^A[u]-f).
\eq
Consider the solution of \eqref{2Hessian} in $[0,1]^3$, given by
\[
u(\x) = \frac{\x^2}{2}, \quad f(\x) = 3.
\]
The iteration is initialized with the exact solution with noise from a uniform distribution $U(-0.01,0.01)$. The result after performing two iterations with the parabolic solver \eqref{naiveparabolic} with time step $dt = dx^4$ and the initial guess are illustrated in Figure~\ref{fig:Failure}. Regardless of the time step choosen ($dt = dx^4/10$ and $dt = dx^4/100$ were also used), after a sufficient number of iterations the solution behaves like in the example of Figure~\ref{fig:Failure}, until it eventually blows up. This tells us that the instability of the parabolic solver is inherent from the discretization rather than being the result of a poorly chosen time step. This instability is due to the fact that there is no mechanism to pick the right solution. The discretization, being a quadratic equation as we will see in ~\autoref{sec:jacobisolver}, has two solutions: the $2$-admissible solution we are looking for and the negative of this.

\begin{figure}[htp]
\centering
\begin{tabular}{cc}
\hspace{-1in}\includegraphics[width=0.65\textwidth]{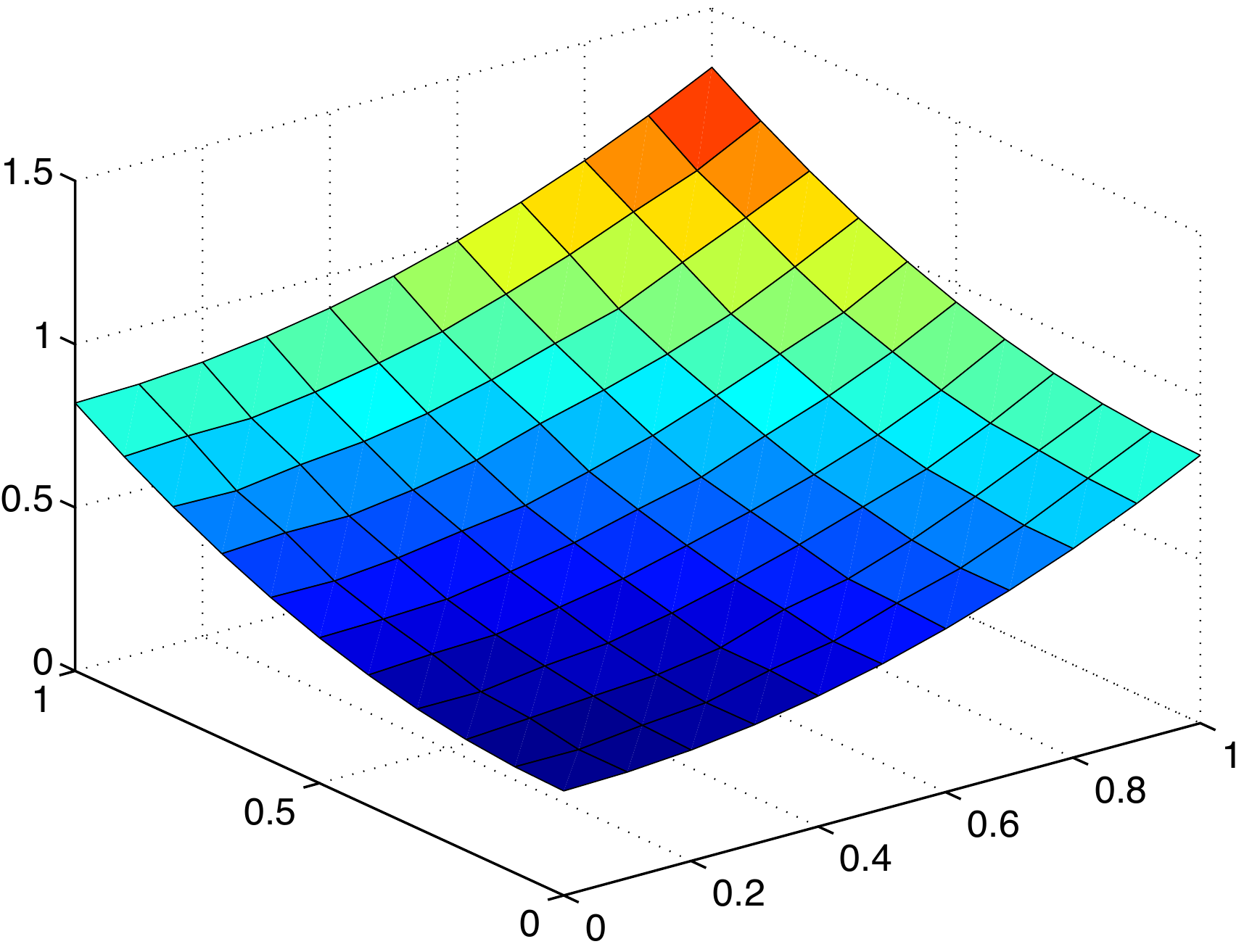} & \includegraphics[width=0.65\textwidth]{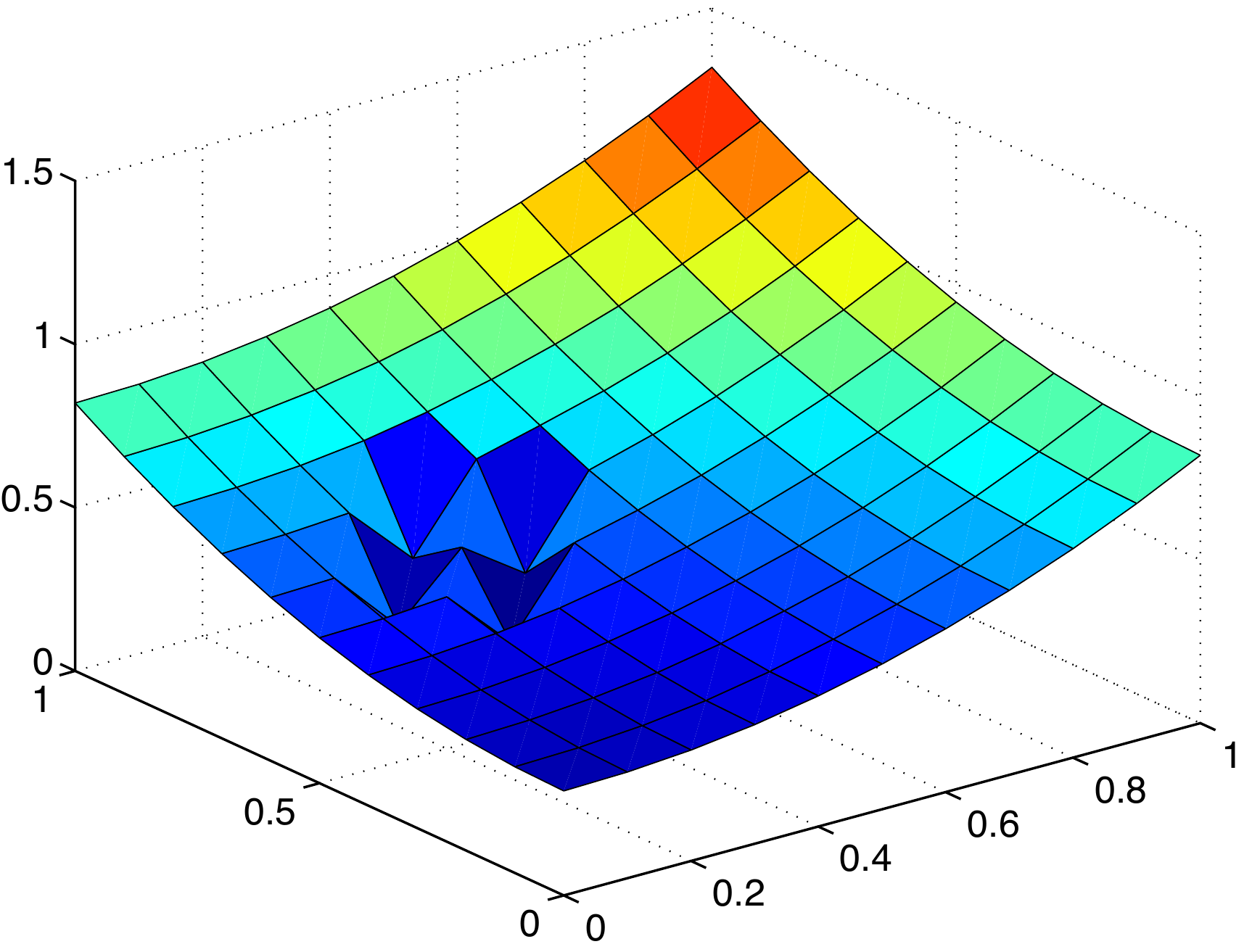}
\end{tabular}
\caption{Failure of the parabolic solver using the naive finite difference scheme: section $z = 0.9$ of the initial guess (left) and the solution after 25 iterations (right).}
\label{fig:Failure}
\end{figure}

\subsection{Solvers for the naive finite difference scheme}
In this section we present three different solvers for the naive finite difference scheme: a Jacobi type solver obtained by solving the discretization for the reference variable; a semi-implicit solver based on an identity that relates the Laplacian and the $2$-Hessian operator; a Newton solver.

\subsubsection{Jacobi solver}\label{sec:jacobisolver}

The accurate discretization of \eqref{2Hessian} leads to a quadratic equation for the reference variable at each grid point. To see this we introduce the notation
\bq
\label{def_d}
\begin{aligned}
a_1& =\frac{u_{i+1,j,k}+u_{i-1,j,k}}{2} & a_2 &=\frac{u_{i,j+1,k}+u_{i,j-1,k}}{2} & a_3&=\frac{u_{i,j,k+1}+u_{i,j,k-1}}{2}\\
a_4& =\frac{u_{i+1,j+1,k}+u_{i-1,j-1,k}}{2} & a_5&=\frac{u_{i-1,j+1,k}+u_{i+1,j-1,k}}{2} & a_6& =\frac{u_{i+1,j,k+1}+u_{i-1,j,k-1}}{2}\\ 
a_7& =\frac{u_{i-1,j,k+1}+u_{i+1,j,k-1}}{2} & a_8&=\frac{u_{i,j+1,k+1}+u_{i,j-1,k-1}}{2}& a_9&=\frac{u_{i,j+1,k-1}+u_{i,j-1,k+1}}{2}
\end{aligned}
\eq
Using \eqref{naivediscrete2Hessian}, $S_2^A[u] = f$ can be rewritten as
\[
\frac{4}{h^4}\left(\sum_{i_1<i_2\leq 3}(a_{i_1}-u_{ijk})(a_{i_2}-u_{ijk})\right) = f_{ijk} + \frac{1}{4h^4}\sum_{p=2}^4(a_{2p}-a_{2p+1})^2
\]

Solving for $u_{ijk}$  and selecting the smaller root (in order to select the locally more plane-subharmonic solution), we obtain
\bq
\label{J}
\tag{J}
u_{ijk} = \frac{a_1+a_2+a_3}{3}-\frac{1}{12}\sqrt{8\sum_{i_1<i_2\leq 3}(a_{i_1}-a_{i_2})^2+3\sum_{p=2}^4(a_{2p}-a_{2p+1})^2+12f_{ijk}h^4}.
\eq

We can now use a Jacobi iteration to find the fixed point of~\eqref{J}. Notice that the plane-subharmonic constraint is not enforced beyond the selection of the smaller root in~\eqref{J}.

\begin{remark}
Formula~\eqref{J} can be rewritten as
\[
u_{ijk} = \frac{a_1+a_2+a_3}{3}-\frac{h^2}{6}\sqrt{\trace(D^{2,h}u_{ijk})^2+3\left(f_{ijk}-S^{2,A}_h[u]\right)}.
\]
\end{remark}

\begin{remark}
Formula~\eqref{J} can also be used in a Gauss-Seidel iteration, which should converge faster than the Jacobi iteration. We choose not to implement it here since all computational results were obtained in MATLAB, which is known to be slow with loops.
\end{remark}

In order to prove the convergence of the above solver, is is sufficient to prove that it is monotone, which in this case is the same as showing that the value $u_{ijk}$ is a non-decreasing function of the neighboring values \cite{ObermanSINUM}. However, this is not the case for \eqref{J}.

\subsubsection{Semi-implicit solver}

The next solver we discuss is a semi-implicit one, which involves solving a Laplace equation at each iteration.

We begin with the following identity for the Laplacian in three dimensions:
\[|\Delta u| = \sqrt{(\Delta u)^2} = \sqrt{u_{xx}^2+u_{yy}^2+u_{zz}^2+2u_{xx}u_{yy}+2u_{xx}u_{zz}+2u_{yy}u_{zz}}.\]
If $u$ solves the $2$-Hessian equation, then
\[|\Delta u| = \sqrt{(\Delta u)^2} = \sqrt{u_{xx}^2+u_{yy}^2+u_{zz}^2+2u_{xy}^2+2u_{xz}^2+2u_{yz}^2+2f} = \sqrt{|D^2u|^2+2f}.\]
This leads to a semi-implicit scheme for solving the $2$-Hessian equation given by
\bq\label{semi}
\Delta u^{n+1} = \sqrt{|D^2u^n|^2+2f}.
\eq
Note that if $u$ is a 2-admissible function, then $\Delta u \geq 0$, a condition the scheme enforces.

A good initial value for the iteration is given by the solution of
\[\Delta u^0 = \sqrt{2f}.\]

\subsubsection{Newton solver}

To solve the discretized equation
\[S_2^A[u]=f\]
we can also use a damped Newton iteration
\[u^{n+1} = u^n-\alpha v^n\]
where $0 < \alpha \leq 1$. The damping parameter $\alpha$ is chosen at each step to ensure that the residual $\Norm{S_2^A[u^n]-f}$ is decreasing. (In practice we can often take $\alpha = 1$, but damping is sometimes needed.) The corrector $v^n$ solves the linear system
\[\left(\nabla_u S_2^A[u^n]\right)v^n = S_2^A[u^n]-f.\]
To setup the above equation we need the Jacobian of the scheme, which is given by
\[\nabla_u S_2^A[u] = \sum_{\substack{\nu_1,\nu_2 \in \{x,y,z\},\\\nu_1 \neq \nu_2}}(\Dt_{\nu_1\nu_1}u)\Dt_{\nu_2\nu_2}-(\Dt_{\nu_1\nu_2}u)\Dt_{\nu_1\nu_2}\]
Notice that it corresponds to the discrete version of the linearization of the $2$-Hessian equation \eqref{lin2Hessian}.

\subsection{Monotone finite difference scheme}
In this section we construct a monotone finite difference scheme. As we saw before, the naive approach of simply using standard finite differences for the terms in the Hessian matrix will not work because the cross derivative terms $u_{xy}$, $u_{xz}$ and $u_{yz}$ are not monotone. Instead the idea is to use wide stencils and a rotated coordinate system in which the Hessian matrix is diagonal. However, this coordinate system must be found in a monotone way. This section is divided in four parts: first, we briefly recall why it is enough to prove that our scheme is consistent and degenerate elliptic (and thus monotone) to conclude that it is convergent; second, we extend the function $\sig$ \eqref{sig2defn} to be non-decreasing in $\R^3$; third, we find an expression for the $2$-Hesssian operator $S_2[u]$ which can be discretized in a monotone manner; and fourth, we present the monotone finite difference scheme.



\subsubsection{Convergence of consistent degenerate elliptic scheme}
The convergence of our finite difference schemes relies, as usual, on the framework developed by Barles and Souganidis \cite{BSnum} and its extension in \cite{ObermanSINUM}.

The framework in \cite{BSnum} provides us with sufficient conditions for the convergence of approximation schemes to the unique viscosity solution of a PDE.

\begin{theorem}
Consider an elliptic equation that satisfies a comparison principle. A consistent, stable and monotone approximation scheme converges locally uniformly to the (unique) viscosity solution.
\end{theorem}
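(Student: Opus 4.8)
The plan is to reproduce the now-standard argument of Barles and Souganidis, built on the method of half-relaxed limits. Write $u^h$ for the solution of the scheme at grid resolution $h$; existence of $u^h$ and a uniform bound $\|u^h\|_\infty \le C$ are exactly what the stability hypothesis provides. Define the half-relaxed upper and lower limits
\[
\bar u(x) = \limsup_{\substack{h \to 0 \\ y \to x}} u^h(y), \qquad \underline u(x) = \liminf_{\substack{h \to 0 \\ y \to x}} u^h(y),
\]
which by stability are finite, and which are respectively upper and lower semicontinuous on $\overline{\Omega}$. By construction $\underline u \le \bar u$ on $\overline{\Omega}$.

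The key step is to show that $\bar u$ is a viscosity subsolution and $\underline u$ a viscosity supersolution of \eqref{2Hessian} in the sense of Definition~\ref{def:visc}; I would carry out the subsolution case, the other being symmetric. Let $\phi \in C^2(\overline{\Omega}) \cap \overline{\Gamma_2}$ and suppose $\bar u - \phi$ has a local maximum at $x_0$; perturbing $\phi$ by a small quadratic we may take the maximum to be strict. From the definition of $\limsup$ one extracts grid points $x_h \to x_0$ at which $u^h - \phi$ attains a local maximum, with $u^h(x_h) \to \bar u(x_0)$. Monotonicity of the scheme then allows one to replace $u^h$ by $\phi$ (up to an additive constant $c_h \to 0$) inside the discrete operator evaluated at $x_h$, yielding a discrete inequality involving only $\phi$; consistency of the scheme lets one pass to the limit $h \to 0$ in that inequality and recover exactly $\sig(\lam(D^2\phi(x_0))) \le f$ when $x_0 \in \Omega$, and the corresponding $\min$-inequality when $x_0 \in \partial\Omega$.

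Once $\bar u$ is a subsolution and $\underline u$ a supersolution, the comparison principle \eqref{CP} — which applies because \eqref{2Hessian} possesses a continuous viscosity solution under the hypotheses of Theorem~\ref{wellposedness} — gives $\bar u \le \underline u$ on $\overline{\Omega}$. Together with $\underline u \le \bar u$ this forces $\bar u = \underline u =: u$, so $u$ is continuous and is the unique viscosity solution. Finally, equality of the two half-relaxed limits combined with their semicontinuity is precisely the statement that $u^h \to u$ locally uniformly on $\overline{\Omega}$; this last implication is a routine semicontinuity/compactness lemma.

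The main obstacle is the key step of the second paragraph: turning a discrete extremum of $u^h - \phi$ into a discrete inequality expressed purely through $\phi$. This is where monotonicity (to discard $u^h$ after the shift) and consistency (to pass from the discrete operator to $\sig(\lam(D^2\phi))$) are genuinely used, and it demands care in the perturbation argument that makes the maximum strict and, especially, in handling points $x_0 \in \partial\Omega$, where the scheme encodes the Dirichlet data and the relaxed boundary inequalities of Definition~\ref{def:visc} come into play.
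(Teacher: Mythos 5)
Your proposal is correct and is precisely the standard Barles--Souganidis argument by half-relaxed limits that the paper relies on: it does not prove this theorem itself but cites it from \cite{BSnum}, and your outline (stability giving bounded $u^h$, the $\limsup^*/\liminf_*$ envelopes, the monotonicity-plus-consistency step showing these are sub/supersolutions, and the comparison principle collapsing them to a single continuous limit with locally uniform convergence) reproduces that reference's proof faithfully, including the correct handling of the relaxed boundary inequalities of Definition~\ref{def:visc}.
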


This framework, however, does not provided a method to verify monotonicity and stability. The work in \cite{ObermanSINUM} accomplishes precisely that.

Our finite difference schemes have the form
\[F[u] = F(u_i,u_{j \in N(i)}-u_i)\]
where $N(i)$ is the list of neighbours of $u_i$. We say that $F$ is degenerate elliptic if $F$ is nondecreasing in each variable.

The following Theorem, which can be found in \cite{ObermanSINUM}, yields a simple condition to verify both monotonicity and stability.

\begin{theorem}
A scheme is monotone and nonexpansive in the $l^\infty$ norm if and only if it is degenerate elliptic.
\end{theorem}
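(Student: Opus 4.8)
The statement to prove is the equivalence: a scheme $F[u] = F(u_i, u_{j\in N(i)}-u_i)$ is monotone and nonexpansive in the $\ell^\infty$ norm if and only if it is degenerate elliptic, i.e.\ $F$ is nondecreasing in each of its arguments. Since this is the abstract result of \cite{ObermanSINUM}, the plan is to prove it directly from the definitions rather than invoke anything from the PDE setting. I would split it into the two implications, working with the scheme written in solved form $u_i = G(u_{j\in N(i)})$ where $G$ is obtained by solving $F(u_i, u_{j\in N(i)} - u_i) = 0$ for $u_i$ — noting that degenerate ellipticity of $F$ is exactly equivalent to $G$ being nondecreasing in each neighbor value (this reformulation is the standard trick, and monotonicity of the fixed-point map $G$ is what one really uses).

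For the forward direction (degenerate elliptic $\Rightarrow$ monotone and nonexpansive): monotonicity is essentially immediate, since if $v_j \le w_j$ for all $j$ then $G(v) \le G(w)$ by coordinatewise monotonicity of $G$, which is the definition of a monotone scheme. For nonexpansiveness, the key step is the classical argument: given two grid functions $u$ and $w$, let $M = \max_j (u_j - w_j) \ge 0$. Then at the maximizing index $i$, using that $G$ is nondecreasing and that $u_j \le w_j + M$ for all $j$, one gets $u_i = G(u_{N(i)}) \le G(w_{N(i)} + M) = G(w_{N(i)}) + M = w_i + M$ — here I would need the additional structural fact that adding a constant $M$ to all arguments of $G$ increases the output by exactly $M$, which follows from the fact that $F$ depends on the neighbors only through the differences $u_{j\in N(i)} - u_i$ (so shifting everything by a constant leaves $F$ unchanged, hence leaves the root $u_i$ shifted by the same constant). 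This gives $\max_j(u_j - w_j) \le \max_j(u_j - w_j)$ at the relevant point and, combined with the symmetric bound, yields $\norm{u - w}_\infty \le \norm{Gu - Gw}_\infty$ in the appropriate fixed-point formulation, i.e.\ the solution map is nonexpansive.

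For the reverse direction (monotone and nonexpansive $\Rightarrow$ degenerate elliptic): I would argue by contradiction. Suppose $F$ fails to be nondecreasing in some argument — say increasing one neighbor value $u_{j_0}$ (with all else fixed) strictly decreases $F$, or decreases the recovered $u_i$. Then one can exhibit two grid functions differing only at the node $j_0$ that violate monotonicity of the scheme directly, contradicting the monotonicity hypothesis. A cleaner route: monotonicity of the scheme applied to single-node perturbations forces $G$ (equivalently $F$) to be nondecreasing in each neighbor slot, and nonexpansiveness pins down the behavior in the diagonal/constant direction; combining these recovers degenerate ellipticity of $F$ in all its variables.

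I expect the main obstacle to be handling the \emph{implicit} nature of the scheme carefully: $F$ is given implicitly and one must justify that the solved form $G$ exists, is single-valued, and inherits the monotonicity — and that the "add a constant to all arguments" identity transfers correctly through the solve. Once the translation invariance (dependence only on differences) is exploited to reduce nonexpansiveness to the maximum-principle-style comparison argument above, the rest is routine. Since the full proof is in \cite{ObermanSINUM}, in the paper itself one would simply cite it; the sketch above is how I would reconstruct it if needed.
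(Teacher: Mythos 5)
First, a point of reference: the paper does not prove this theorem at all --- it is quoted from \cite{ObermanSINUM} and used as a black box, so there is no in-paper argument to match yours against. Judged on its own terms, your reconstruction has a genuine gap at its very first step. You replace the scheme $F$ by a solved form $u_i = G(u_{j\in N(i)})$ obtained by solving $F(u_i,u_{j\in N(i)}-u_i)=0$ for the reference value, and then prove properties of $G$. But the notions of \emph{monotone} and \emph{nonexpansive} in \cite{ObermanSINUM} are properties of the scheme $F$ itself (equivalently of the associated explicit update $u\mapsto u-\rho F[u]$ under a CFL restriction), not of an implicit solution operator, and the map $G$ need not exist or be single-valued. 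Indeed, for the very scheme this paper studies, $S_2^A[u]=f$ is a quadratic in $u_{ijk}$ with two roots --- selecting the right one is precisely the content of \eqref{J} and the reason the naive parabolic solver fails --- and with the convention $F(u_i,u_{j\in N(i)}-u_i)$ nondecreasing in every slot, $F$ is increasing in $u_i$ through its first argument but decreasing in $u_i$ through each difference, so $F$ need not be monotone in $u_i$ and the root is not pinned down. You flag this yourself as ``the main obstacle,'' but it is not a technicality to be smoothed over later: it invalidates the reduction. The translation-invariance and coordinatewise-monotonicity arguments you describe are the right tools, but they must be applied to $F$ directly.

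Two further problems. The reverse implication is asserted rather than proved: the actual content there is that the two hypotheses control disjoint groups of variables --- monotonicity tested on single-neighbor perturbations (reference value fixed) forces the correct monotonicity in each difference $u_j-u_i$, while nonexpansiveness tested on perturbations of the reference value alone forces the correct monotonicity in the first argument --- and neither computation is carried out. And as a symptom that the forward direction is not nailed down either, your displayed conclusion reads $\norm{u-w}_\infty \le \norm{Gu-Gw}_\infty$, which is the reverse of nonexpansiveness, and the line preceding it reduces to a tautology. In the paper itself the correct move is simply to cite \cite{ObermanSINUM}; if you want a self-contained proof, rerun your maximum-principle argument on $F$ (or on $u-\rho F[u]$) without ever inverting the scheme.
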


Consequently, proving that a scheme is convergent is reduced to checking two conditions: consistency and degenerate ellipticity.

\subsubsection{Non-decreasing extension of the operator}
In this section we find a non-decreasing extension of $\sig$ from $\Gamma$ to $\R^3$. Our ultimate goal is to build a monotone finite difference approximation of the $2-$Hessian equation.  Since  we know that the eigenvalues of admissible solutions $u$ belong to the set $\Gamma$, we are free to redefine $\sig$ outside of $\Gamma$ in order to ensure convergence. We then require an extension of $\sig$ that is non-decreasing in $\R^3$, which is accomplished in the following Lemma.

\begin{lemma}\label{lemma:monext}
The function $\sigex = f \circ \sort$ where $\sort$ denotes the sorting function and $f$ is given by
\[
f(x,y,z) = x\max(y,\abs{x}) + x\max(z,\abs{x}) +\max(y,\abs{x}) \max(z,\abs{x})
\]
extends $\sig$ on $\Gamma$ and is non-decreasing in $\R^3$.
\end{lemma}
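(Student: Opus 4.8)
The plan is to verify the two required properties separately. First, the claim that $\sigex$ extends $\sig$ on $\Gamma$: since $\sigex = f \circ \sort$ and $\sig = \sigma_2 \circ \sort$ is itself symmetric, it suffices to check that $f(x,y,z) = xy + xz + yz$ whenever $(x,y,z)$ is a \emph{sorted} point of $\Gamma$, i.e. $x \le y \le z$ with $x+y>0$ (the constraint $\lam_1+\lam_2>0$ is exactly the binding one for a sorted triple in $\Gamma$, as the other two follow). Under $x+y>0$ we have $y > -x$, hence $y \ge \abs{x}$ when $x \le 0$, and trivially $y \ge x = \abs{x}$ when $x \ge 0$; either way $\max(y,\abs{x}) = y$. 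Since $z \ge y \ge \abs{x}$ as well, $\max(z,\abs{x}) = z$. Substituting gives $f(x,y,z) = xy + xz + yz$, which is $\sig$ evaluated at that point. So the extension property holds.

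The substantive part is monotonicity of $\sigex$ on all of $\R^3$. Because $\sort$ is itself monotone in an appropriate sense — increasing one coordinate of an arbitrary point and re-sorting yields a sorted vector that has increased in the componentwise (in fact in each order statistic) sense — it is enough to show that $f(x,y,z)$ is non-decreasing in each argument \emph{on the sorted region} $R = \{x \le y \le z\}$, and moreover that $\sigex$ is non-decreasing as a function of the original unsorted coordinates; the cleanest route is to observe $f$ is symmetric under $y \leftrightarrow z$, and then argue that $f$ restricted to $R$, being non-decreasing in each of its three slots, pulls back to a non-decreasing function of the unsorted input (a standard fact: a symmetric function whose restriction to the ordered cone is coordinatewise monotone is monotone everywhere). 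So I would reduce to: $\partial_x f \ge 0$, $\partial_y f \ge 0$, $\partial_z f \ge 0$ on $R$, handling the finitely many regions cut out by the sign of $x$ and by whether $y \gtrless \abs{x}$, $z \gtrless \abs{x}$.

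Concretely, split into cases. If $x \ge 0$: then $\abs{x} = x \le y \le z$, so $\max(y,\abs{x}) = y$ and $\max(z,\abs{x}) = z$, giving $f = xy+xz+yz$; here $\partial_x f = y+z \ge 2x \ge 0$, $\partial_y f = x+z \ge 0$, $\partial_z f = x+y \ge 0$. If $x < 0$: write $m_y = \max(y,-x)$, $m_z = \max(z,-x)$, and $f = x m_y + x m_z + m_y m_z$. In $y$: $\partial_y f = (x + m_z)\partial_y m_y$, and $\partial_y m_y \in \{0,1\}$ with value $1$ only when $y > -x$, i.e. only when $m_y = y$; but then $m_z \ge m_y = y > -x$ so $x + m_z > 0$ — and the same sign holds where $\partial_y m_y = 0$ since the term vanishes. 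In $z$: symmetric. The only delicate slot is $x$: $\partial_x f = m_y + m_z + x(\partial_x m_y + \partial_x m_z)$ where $\partial_x m_y = 0$ if $y \ge -x$ and $= -1$ if $y < -x$ (recall $\partial_x(-x) = -1$), similarly for $z$. When both $y,z \ge -x$: $\partial_x f = y + z \ge -2x > 0$. When $y < -x \le z$ (so $m_y = -x$, $m_z = z$, $\partial_x m_y = -1$, $\partial_x m_z = 0$): $\partial_x f = -x + z - x = z - 2x > 0$. When $y \le z < -x$ (so $m_y = m_z = -x$): $\partial_x f = -2x + x(-1-1) = -2x + 2x = 0 \ge 0$. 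So all three partials are nonnegative throughout $R$ (at the non-smooth seams one checks continuity of $f$ and uses monotonicity of each one-sided piece, or simply notes $f$ is a max/min-composition of affine pieces each of which is coordinatewise monotone).

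The main obstacle is not any single computation but getting the case-splitting airtight — in particular the derivative-in-$x$ analysis, where the $\abs{x}$ inside two separate $\max$'s means $x$ appears both explicitly and through $\partial_x(-x) = -1$, and one must confirm the two negative contributions are exactly cancelled (never overwhelmed) by the explicit $m_y + m_z$ term. A secondary point worth stating carefully is the reduction lemma: that coordinatewise monotonicity of $f$ on the sorted cone, together with symmetry of $f$, implies $f \circ \sort$ is monotone on all of $\R^3$; this is routine but should be invoked explicitly rather than left implicit.
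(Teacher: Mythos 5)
Your overall strategy (reduce to the sorted cone, then verify coordinatewise monotonicity by cases) is sound and close in spirit to the paper's, and your verification of the extension property is correct. However, the step you yourself single out as the crux --- the $x$-derivative when $x<0$ --- contains a genuine error. Writing $f = x m_y + x m_z + m_y m_z$ with $m_y,m_z$ both depending on $x$, the product rule gives
\[
\partial_x f = m_y + m_z + x\left(\partial_x m_y + \partial_x m_z\right) + (\partial_x m_y)\, m_z + m_y\,(\partial_x m_z),
\]
and your formula drops the last two terms, which come from differentiating $m_y m_z$. Consequently, in the case $y < -x \le z$ the correct value is $\partial_x f = -2x$, not $z-2x$; and in the case $y \le z < -x$ it is again $-2x$, not $0$ (even by your own incomplete formula the arithmetic should read $-2x + x(-2) = -4x$, not $-2x+2x=0$). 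The lemma survives because all the true values are still nonnegative, but the ``exact cancellation'' you describe as the delicate point is not what actually happens, so the proof as written is not correct.

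The paper avoids this bookkeeping entirely by factoring, on the sorted cone,
\[
f(x,y,z) = \max\left(y+x,\abs{x}+x\right)\,\max\left(z+x,\abs{x}+x\right) - x^2 .
\]
Then for $x+y\ge 0$ one has $f=\sig$, whose gradient $(y+z,\,x+z,\,x+y)$ is nonnegative there, while for $x+y<0$ (hence $x<0$) the $z$-dependent factor cancels identically and $f \equiv -x^2$, which is increasing in $x$ for $x<0$ and constant in $y,z$; no derivatives of $\max$ terms are needed. Two smaller points: your reduction lemma (monotonicity of $f$ on the sorted cone plus monotonicity of the order statistics implies monotonicity of $f\circ\sort$) is indeed worth stating, and the paper is equally brief about it; but your fallback justification that $f$ is ``a max/min-composition of affine pieces'' is inaccurate --- $f$ is piecewise quadratic --- so the matching across the non-smooth seams should rest on continuity of $f$ together with nonnegative one-sided derivatives on each piece.
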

\begin{proof}
Without loss of generality, we assume that $x \le y \le z$ since sorting the values is monotone. Moreover, we can rewrite $f$ as
\[
f(x,y,z) = \max\left (y+x,|x| + x\right ) \max\left (z+x,\abs{x} + x\right )- x^2.
\]

Suppose $(x,y,z) \in \Gamma$, then we recover $\sig(x,y,z)$.

Next we show that $\sigex$ is non-decreasing as a function of $(x,y,z)$. We have two cases to consider:
\begin{itemize}
	\item $x+y \geq 0$
\end{itemize}
Since $x \leq y \leq z$, $(x,y,z) \in \Gamma$ and so we recover $\sig$ which we know to be a non-decreasing function in $\Gamma$.

\begin{itemize}
	\item $x+y < 0$
\end{itemize}
Since $x \leq y \leq z$, $x < 0$. We then get $\sigex(x,y,z) = -x^2$, which is increasing since $x < 0$.


Hence $\sigex$ is non-decreasing.
\end{proof}

\subsubsection{Elliptic expression for the operator}
In this section we build an expression that can be discretized in a monotone way.

The idea is to mimic what was done for the Monge-Amp\`{e}re equation in \cite{ObermanFroeseFast}: use a matrix identity to obtain a monotone expression for the operator.

First note that $\trace(M)$ is invariant over conjugation $O^T M O$ by orthogonal matrices $O$. Second note that $\trace(M^2) = \sum_{ij} m_{ij}^2 \ge \sum_i m_{ii}^2$ with equality when $M$ is diagonal. Hence we have 
\[
\trace(M)^2 - \trace(M^2) \le \trace(O^TMO)^2 - \sum_i (O^TMO)_{ii}^2
\]
and therefore
\[
2 c(M) = \min_{\substack{O^TO = I,\\ R = O^TMO}} \left\{\left(\sum_i r_{ii}\right )^2 - \sum_i r_{ii}^2\right\},
\]
which can be rewritten as
\bq
c(M) = \min_{\substack{O^TO = I,\\ R = O^TMO}}\sig(\diag(R)),
\eq
where $\diag(R) =(r_{11}, r_{22}, r_{33})$ is the vector which is the diagonal of the matrix $R$  and $\sig$ is defined by~\eqref{sig2defn}. Thus, we have just proved the following Lemma.

\begin{lemma}\label{variational2Hessian}
Let $M$ be a $3\times 3$ symmetric matrix and $V$ be the set of all orthonormal bases of $\R^3$:
\[V = \left\{(\nu_1,\nu_2,\nu_3) \mid \nu_i \in \R^3, \nu_i~\bot~\nu_j \text{ if } i \neq j, \norm{\nu_i}_2 = 1 \right\}.\]
Then
\bq
c(M) = \min_{(\nu_1,\nu_2,\nu_3)\in V}\sig\left(\nu_1^TM\nu_1,\nu_2^TM\nu_2,\nu_3^TM\nu_3\right).
\eq
\end{lemma}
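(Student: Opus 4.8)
The plan is to derive the identity by combining two elementary facts about the trace and then recasting the resulting minimization in the notation of the lemma. First I would recall that $c(M) = \tfrac12(\trace(M)^2 - \trace(M^2))$ from~\eqref{DiagM}, and that both $\trace(M)$ and $\trace(M^2)$ are invariant under conjugation $R = O^T M O$ by an orthogonal matrix $O$, since $\trace(O^T M O) = \trace(M O O^T) = \trace(M)$ and similarly $\trace((O^T M O)^2) = \trace(O^T M^2 O) = \trace(M^2)$. In particular $\trace(M)^2 - \trace(M^2) = \trace(R)^2 - \trace(R^2)$ for every such $R$, so the left-hand side is constant over the admissible set $\{R = O^T M O : O^T O = I\}$.

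Next I would use the key inequality $\trace(R^2) = \sum_{ij} r_{ij}^2 \ge \sum_i r_{ii}^2$, which holds for any matrix and becomes an equality precisely when $R$ is diagonal. Subtracting this from the (constant) quantity $\trace(R)^2$ gives
\[
\trace(M)^2 - \trace(M^2) = \trace(R)^2 - \trace(R^2) \le \Bigl(\sum_i r_{ii}\Bigr)^2 - \sum_i r_{ii}^2
\]
for every orthogonal $O$, with equality when $O$ is chosen so that $R = O^T M O$ is diagonal — such an $O$ exists because $M$ is symmetric, by the spectral theorem. Hence
\[
2\,c(M) = \trace(M)^2 - \trace(M^2) = \min_{\substack{O^T O = I \\ R = O^T M O}} \left\{ \Bigl(\sum_i r_{ii}\Bigr)^2 - \sum_i r_{ii}^2 \right\}.
\]
Dividing by $2$ and observing that $\tfrac12\bigl((\sum_i r_{ii})^2 - \sum_i r_{ii}^2\bigr) = r_{11}r_{22} + r_{11}r_{33} + r_{22}r_{33} = \sig(\diag(R))$ gives $c(M) = \min \sig(\diag(R))$ over all orthogonal conjugations.

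Finally I would translate this into the stated form: writing $O = (\nu_1 \mid \nu_2 \mid \nu_3)$ with columns forming an orthonormal basis, the diagonal entries of $R = O^T M O$ are exactly $r_{ii} = \nu_i^T M \nu_i$, and the set of such column triples is precisely $V$. Substituting yields $c(M) = \min_{(\nu_1,\nu_2,\nu_3)\in V} \sig(\nu_1^T M \nu_1, \nu_2^T M \nu_2, \nu_3^T M \nu_3)$, which is the claim. I do not anticipate a serious obstacle here; the only point requiring a little care is confirming that the minimum is attained (not merely an infimum), which follows from the spectral theorem guaranteeing a diagonalizing orthogonal matrix, combined with compactness of the orthogonal group if one prefers to argue that way.
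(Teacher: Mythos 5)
Your argument is correct and follows essentially the same route as the paper: invariance of $\trace(M)$ and $\trace(M^2)$ under orthogonal conjugation, the inequality $\trace(R^2)=\sum_{ij}r_{ij}^2\ge\sum_i r_{ii}^2$ with equality for diagonal $R$ (attained via the spectral theorem), and the identification $r_{ii}=\nu_i^T M\nu_i$. Your explicit remark that the minimum is attained, not merely approached, is a small point of care the paper leaves implicit.
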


We can now use Lemma \ref{variational2Hessian} to characterize the $2$-Hessian operator of a $C^2$ function by expressing it in terms of second directional derivatives of $u$ as follows:
\bq\label{monotoneidentity}
S_2[u] = \min_{(\nu_1,\nu_2,\nu_3)\in V}\sig\left(\frac{\partial^2 u}{\partial \nu_1^2},\frac{\partial^2 u}{\partial \nu_2^2},\frac{\partial^2 u}{\partial \nu_3^2}\right).
\eq

\subsubsection{Monotone operator}
We now present the monotone discretization of the $2$-Hessian operator.

We approximate the second derivatives using centered finite differences which leads to a \emph{spatial} discretization with parameter $h$. In addition, we consider a finite number of possible directions $\nu$ that lie on the grid, thus introducing the \emph{directional} discretization with parameter $d\theta$. We denote the set of orthogonal basis available on the grid by $\mathcal{G}$. We then have
\bq\label{monotone2Hessian}\tag*{$(2H)^M$}
S_2^M[u] = \min_{(\nu_1,\nu_2,\nu_3)\in\mathcal{G}} \sigex\left(\Dt_{\nu_1\nu_1}u,\Dt_{\nu_2\nu_2}u,\Dt_{\nu_3\nu_3}u\right),
\eq
where $\mathcal{D}_{\nu\nu}$ is the finite difference operator for the second directional derivative in the direction $\nu$ which lies on the finite difference grid and are given by
\[\Dt_{\nu\nu}u(x_i) = \frac{1}{|\nu|^2h^2}(u(x_i+h\nu)+u(x_i-h\nu)-2u(x_i)).\]
Depending on the direction of the vector $\nu$, this may involve a wide stencil.

We define $d\theta$ as
\[d\theta = \max_{(w_1,w_2,w_3)\in V} \min_{(\nu_1,\nu_2,\nu_3)\in\mathcal{G}} \max\left\{\arccos\left(\frac{w_1^T\nu_1}{\norm{\nu_1}}\right),\arccos\left(\frac{w_2^T\nu_2}{\norm{\nu_2}}\right),\arccos\left(\frac{w_3^T\nu_3}{\norm{\nu_3}}\right)\right\}.\]

We now define $\mathcal{G}$ in more detail. Let $n_\theta$ denote the width of the stencil and set
\[\mathcal{V}_1 = \left\{\nu \in \Z^3: |\nu_i| \leq 1, \norm{\nu} \neq 0\right\}\]
and for $n_\theta \geq 2$
\[\mathcal{V}_{n_\theta} = \left\{\nu \in \Z^3: |\nu_i| \leq n_\theta, \forall_{|t|<1} \: t\nu \notin \mathcal{V}_{n_\theta-1} \right\}.\]
We then have
\[\mathcal{G}_{n_\theta} = \left\{(\nu_1,\nu_2,\nu_3) \in \mathcal{V}_{n_\theta}^3: \nu_i~\bot~\nu_j \text{ if } i \neq j\right\}.\]

We will refer to the monotone schemes with respect to the number of points in the stencil. For instance, the monotone scheme with the stencil of length $1$ (i.e., $n_\theta = 1$) has $n_S+1 = 27$ points.

\begin{remark}
Given that $\sigex$ is a symmetric function when implementing the monotone scheme we do not need to look into all the triplets in $\mathcal{G}_{n_\theta}$. For instance, for $n_\theta = 1$ we only need to look for the triplets in Table \ref{table:directionmonotonescheme}.
\end{remark}

\begin{table}[htp]
\centering\footnotesize
\begin{tabular}{c|c|c}
$v_1$		& $v_2$		& $v_3$ \\\hline
$(1,1,0)$		& $(1,-1,0)$	& $(0,0,1)$ \\
$(1,0,1)$		& $(1,0,-1)$	& $(0,1,0)$ \\
$(1,0,0)$		& $(0,1,1)$	& $(0,1,-1)$ \\
$(1,0,0)$		& $(0,1,0)$	& $(0,0,1)$
\end{tabular}
\caption{Elements of $\mathcal{G}_1$ up to permutations.}
\label{table:directionmonotonescheme}
\end{table}

\begin{table}[htp]
\centering\footnotesize
\begin{tabular}{c|c|c|c|c|c|c}
$n_\theta$	& $1$	& $2$	& $3$	& $4$	& $5$	& $6$ \\ \hline
$n_S$		& $26$	& $98$	& $290$	& $579$	& $1155$	& $1731$
\end{tabular}
\caption{$n_S$ is the number of $\nu$ directions available in the stencil, i.e., $n_S = \# \mathcal{V}_{n_\theta}$}
\label{table:monotonescheme}
\end{table}

\begin{figure}[htp]
\centering
\begin{tabular}{cc}
\includegraphics[width=0.5\textwidth]{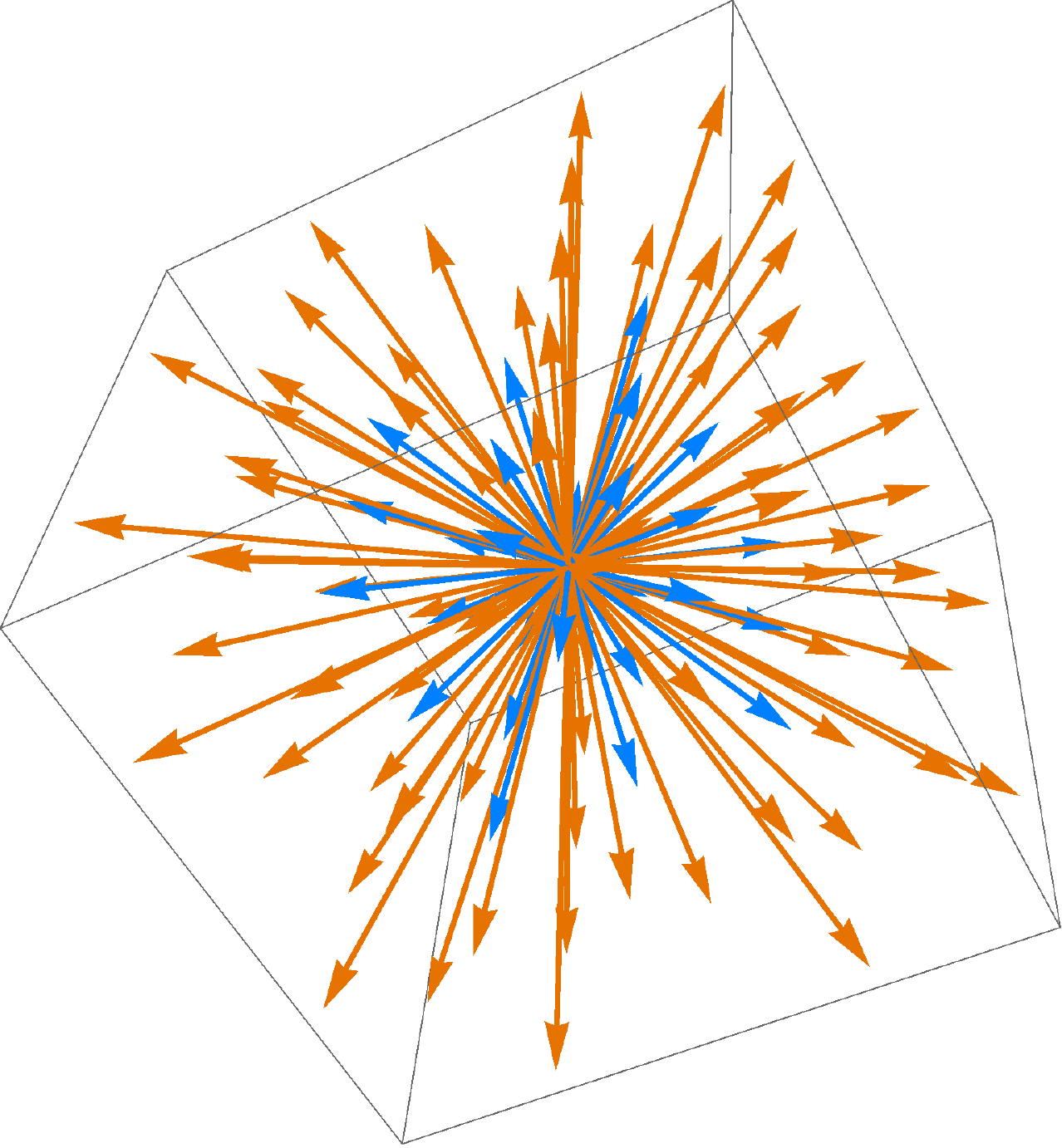}\end{tabular}
\caption{Elements of $\mathcal{V}_1$ (blue) and elements of $\mathcal{V}_2 \setminus \mathcal{V}_1$ (orange).}
\label{fig:Ex7curve}
\end{figure}

We now give the proof of the convergence of the monotone scheme. In order to do that, we first need to define our scheme at the boundary. Since we choose our domain to be the box $[0,1]^3$, the grid points are aligned with the boundary and so we simply have to set $g$ at those nodes. Set
\bq\tag{M}\label{monotone}F^M[u](x) = \begin{cases}
S_2^M[u](x) - f(x),	& \text{if } x \in \Omega,\\
u(x)-g(x),		& \text{if } x \in \partial \Omega.
\end{cases}
\eq

\begin{lemma}\label{degenerateelliptic}
The finite difference scheme given by \eqref{monotone} is degenerate elliptic.
\end{lemma}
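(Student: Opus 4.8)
The plan is to verify that the scheme $F^M[u]$ defined in \eqref{monotone} is nondecreasing in each of the differences $u_{j} - u_i$, which is the meaning of degenerate ellipticity for a scheme of the form $F[u] = F(u_i, u_{j\in N(i)} - u_i)$. I would split the argument along the two cases in the definition of $F^M[u](x)$.

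First, for $x \in \partial\Omega$, the scheme reduces to $u(x) - g(x)$, which depends only on $u_i$ itself (no neighboring values enter) and is trivially nondecreasing in $u_i$; this case is immediate. Second, for $x \in \Omega$, I would write $F^M[u](x) = S_2^M[u](x) - f(x)$, and since $f(x)$ is a fixed datum, it suffices to show that $S_2^M[u](x)$ is degenerate elliptic. Recall from \eqref{monotone2Hessian} that $S_2^M[u](x) = \min_{(\nu_1,\nu_2,\nu_3)\in\mathcal{G}} \sigex\left(\Dt_{\nu_1\nu_1}u,\Dt_{\nu_2\nu_2}u,\Dt_{\nu_3\nu_3}u\right)$. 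The key structural observations are: (i) each $\Dt_{\nu\nu}u(x_i) = \frac{1}{|\nu|^2 h^2}\left(u(x_i + h\nu) + u(x_i - h\nu) - 2u(x_i)\right)$ can be written in incremental form as $\frac{1}{|\nu|^2 h^2}\left((u(x_i+h\nu) - u(x_i)) + (u(x_i - h\nu) - u(x_i))\right)$, so it is a nonnegative linear combination of the admissible differences $u_j - u_i$ and hence nondecreasing in each such difference; (ii) by Lemma~\ref{lemma:monext}, $\sigex = f\circ\sort$ is nondecreasing in each of its three arguments on all of $\R^3$; (iii) a composition of a nondecreasing function with nondecreasing functions is nondecreasing, so each term $\sigex(\Dt_{\nu_1\nu_1}u, \Dt_{\nu_2\nu_2}u, \Dt_{\nu_3\nu_3}u)$ is nondecreasing in each difference; and (iv) the pointwise minimum of a family of nondecreasing functions is again nondecreasing. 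Chaining (i)--(iv) gives that $S_2^M[u](x)$ is degenerate elliptic, completing the proof.

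The argument is essentially a bookkeeping exercise in monotone composition, so I do not expect a serious obstacle; the one point requiring a little care is observation (i) — making sure that every stencil point $x_i \pm h\nu$ that appears is genuinely a neighbor in the graph $N(i)$ (including the wide-stencil points, which may require the stencil to be interpreted on the extended grid or handled via interpolation near the boundary) and that the coefficients $\frac{1}{|\nu|^2 h^2}$ are indeed nonnegative, which they are. One should also note that for grid points near $\partial\Omega$ whose wide stencil would reach outside $[0,1]^3$, the relevant directions are simply excluded from $\mathcal{G}$ at that point (or the neighbor value is a boundary value $g$), and since restricting the min to a subfamily of nondecreasing functions preserves monotonicity in the remaining differences, degenerate ellipticity is not affected. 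Given all this, the conclusion follows directly and the lemma holds.
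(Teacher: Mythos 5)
Your proposal is correct and follows essentially the same argument as the paper: the discrete second directional derivatives $\Dt_{\nu\nu}u$ are nondecreasing in the differences $u_j-u_i$, the function $\sigex$ is nondecreasing by Lemma~\ref{lemma:monext}, the minimum of nondecreasing functions is nondecreasing, and the boundary term $u-g$ is trivially degenerate elliptic. Your extra remarks on the wide-stencil points near $\partial\Omega$ are a reasonable precaution but not part of the paper's proof, which addresses that issue separately in the implementation.
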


\begin{proof}
From the definition, the discrete second directional derivatives $\Dt_{\nu\nu}$ are nondecreasing functions of the differences between neighboring values and reference values, $u_j-u_i$, where $u_j$ is one of the neighboring values of $u_i$ in the direction $\nu$. The scheme \ref{monotone2Hessian} is a nondecreasing combination of the operators $\min$ and $\sigex$ (the latter proved in Lemma \ref{lemma:monext} to be nondecreasing) applied to the degenerate elliptic terms $\Dt_{\nu\nu}$, and so it is also degenerate elliptic. It is also clear that $u-g$ is degenerate elliptic. Hence, \eqref{monotone} is degenerate elliptic.
\end{proof}

\begin{lemma}\label{consistency}
Let $x_0 \in \Omega$ be a reference point on the grid and $\phi$ be a $C^4$ function that is defined in a neighborhood of the grid. Then the scheme $S_2^M[\phi]$ defined in \ref{monotone2Hessian} approximates \eqref{2Hessian} with accuracy
\[S_2^M[\phi] = S_2[\phi] + \bO(h^2 + d\theta).\]
\end{lemma}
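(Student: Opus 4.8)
The plan is to combine two independent error estimates: a $\bO(h^2)$ spatial error coming from the centered second-difference approximation of directional derivatives, and a $\bO(d\theta)$ directional error coming from replacing the continuous minimum over all orthonormal frames $V$ (as in the exact identity \eqref{monotoneidentity}) with the minimum over the finite set of grid-aligned frames $\mathcal{G}$. Since $S_2^M$ uses the extended operator $\sigex$ rather than $\sig$, I would first note that on a $C^4$ function $\phi$ the relevant directional second differences converge to the true directional second derivatives, and for $h$ small enough the triple of second derivatives at any orthonormal frame that nearly achieves the minimum lies in $\Gamma$ (this is where $f>0$ / $2$-admissibility near the minimizing frame is used), so $\sigex$ agrees with $\sig$ there and Lipschitz continuity of $\sigex$ controls the rest.

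First I would establish the spatial estimate: for a fixed grid direction $\nu$, Taylor expansion of $\phi\in C^4$ gives
\[
\Dt_{\nu\nu}\phi(x_0) = \frac{\partial^2\phi}{\partial\nu^2}(x_0) + \bO(h^2),
\]
with the constant in $\bO(h^2)$ controlled by $\|\phi\|_{C^4}$ on a neighborhood and by $|\nu|^2 h^2$ in the denominator; since the stencil width $n_\theta$ is fixed, $|\nu|$ is bounded and this is uniform. Because $\sigex$ is Lipschitz (being piecewise polynomial with matching values across the gluing, as in Lemma~\ref{lemma:monext}), applying $\sigex$ to the perturbed triple changes its value by $\bO(h^2)$ as well, uniformly over the finitely many frames in $\mathcal{G}$. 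Hence
\[
S_2^M[\phi] = \min_{(\nu_1,\nu_2,\nu_3)\in\mathcal{G}} \sigex\!\left(\tfrac{\partial^2\phi}{\partial\nu_1^2},\tfrac{\partial^2\phi}{\partial\nu_2^2},\tfrac{\partial^2\phi}{\partial\nu_3^2}\right) + \bO(h^2).
\]

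Next I would establish the directional estimate, i.e.\ that the discrete minimum on the right above differs from $S_2[\phi]=\min_{V}\sig(\cdot)$ by $\bO(d\theta)$. One inequality is immediate: $\mathcal{G}\subseteq V$, so the grid minimum is $\geq$ the true minimum (after replacing $\sigex$ by $\sig$, valid near the true minimizer for small $h$). For the reverse, let $(w_1,w_2,w_3)\in V$ achieve $S_2[\phi]$; by the definition of $d\theta$ there is a grid frame $(\nu_1,\nu_2,\nu_3)\in\mathcal{G}$ with each angle $\arccos(w_i^T\nu_i/\|\nu_i\|) \le d\theta$, hence $\|w_i - \nu_i/\|\nu_i\|\| = \bO(d\theta)$. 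Since $x\mapsto x^T D^2\phi(x_0) x$ is smooth, the directional second derivatives at the grid frame differ from those at the $w$-frame by $\bO(d\theta)$; Lipschitz continuity of $\sig$ (and $\sigex$) then gives that the grid frame realizes a value within $\bO(d\theta)$ of $S_2[\phi]$. Combining the two inequalities yields the directional estimate, and adding the two error sources gives $S_2^M[\phi] = S_2[\phi] + \bO(h^2 + d\theta)$.

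The main obstacle I anticipate is the interplay between the extension $\sigex$ and the two limits: one must be careful that $d\theta$ and $h$ are taken small enough that the minimizing (or near-minimizing) frames all produce directional-derivative triples inside the region where $\sigex=\sig$ — otherwise the extension could in principle produce a spuriously small value of $\sigex$ at some far-off frame and ruin the lower bound. Handling this cleanly requires either (i) arguing that on a $C^4$ function all directional second derivatives at $x_0$ are continuous functions of the direction so the whole family of values stays in a fixed compact set, and invoking uniform Lipschitz bounds for $\sigex$ on that set, or (ii) noting that for a $2$-admissible test function the eigenvalues lie in $\overline{\Gamma}$ so $\nu^T D^2\phi\, \nu$ is controlled and $\sigex \geq \sig$ globally with equality where it matters. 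Option (i) is the safer route and is where I'd put the bulk of the care; everything else is routine Taylor expansion plus Lipschitz estimates.
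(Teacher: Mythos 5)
Your proposal follows essentially the same route as the paper's proof: a Taylor expansion supplies the $\bO(h^2)$ spatial error, and a two-sided comparison between the grid minimum over $\mathcal{G}$ and the continuous minimum over $V$ --- using the definition of $d\theta$ to produce a grid frame within $\bO(d\theta)$ of the true minimizing frame, then Lipschitz continuity of the symmetric function --- supplies the $\bO(d\theta)$ directional error. The only difference is that you explicitly worry about where $\sigex$ coincides with $\sig$, a point the paper's proof passes over by simply writing $\sig$ throughout; your option (i) is the right way to resolve it (and note that option (ii)'s claim that $\sigex \ge \sig$ globally is false, e.g.\ $\sigex(-2,1,1) = -4 < -3 = \sig(-2,1,1)$).
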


\begin{proof}
From a simple Taylor series computation we have
\[\Dt_{\nu\nu}\phi(x_0) = \frac{\partial^2 \phi}{\partial \nu^2}(x_0) + \bO(h^2).\]

Using~\eqref{monotoneidentity} we can rewrite the $2$-Hessian operator as
\[S_2[\phi] = \min_{(\nu_1,\nu_2,\nu_3)\in V}\sig\left(\frac{\partial^2 \phi}{\partial \nu_1^2},\frac{\partial^2 \phi}{\partial \nu_2^2},\frac{\partial^2 \phi}{\partial \nu_3^2}\right) = \sig\left(\frac{\partial^2 \phi}{\partial v_1^2},\frac{\partial^2 \phi}{\partial v_2^2},\frac{\partial^2 \phi}{\partial v_3^2}\right),\]
where the $v_j$ are orthogonal unit vectors, which may not be in the set of grid vectors $\mathcal{G}$. We know that by definition of $d\theta$ we have
\[\min_{(\nu_1,\nu_2,\nu_3)\in\mathcal{G}} \max\left\{\arccos\left(\frac{v_1^T\nu_1}{\norm{\nu_1}}\right),\arccos\left(\frac{v_2^T\nu_2}{\norm{\nu_2}}\right),\arccos\left(\frac{v_3^T\nu_3}{\norm{\nu_3}}\right)\right\} \leq d\theta.\]
Let then $w\in \mathcal{G}$ where the above min is attained. Then the angle between between each $v_j$ and $w_j$ is less or equal than $d\theta$ and so there is $dv_j$ such that
\[v_j + dv_j = \frac{w_j}{\norm{w_j}}\]
with $\norm{dv_j} = \bO(d\theta)$.

Now we consider the discretized problem
\begin{align*}
S_2^M[\phi]	& = \min_{(\nu_1,\nu_2,\nu_3)\in \mathcal{G}}\sig\left(\Dt_{\nu_1\nu_1}\phi,\Dt_{\nu_2\nu_2}\phi,\Dt_{\nu_3\nu_3}\phi\right)\\
			& \leq \sig\left(\Dt_{w_1w_1}\phi,\Dt_{w_2w_2}\phi,\Dt_{w_3w_3}\phi\right)\\
			& = \sig\left(\frac{\partial^2 \phi}{\partial w_1^2},\frac{\partial^2 \phi}{\partial w_2^2},\frac{\partial^2 \phi}{\partial w_3^2}\right) + \bO(h^2)\\
			& = \sig\left(\frac{\partial^2 \phi}{\partial v_1^2},\frac{\partial^2 \phi}{\partial v_2^2},\frac{\partial^2 \phi}{\partial v_3^2}\right) + \bO(h^2+d\theta)\\
			& = \min_{(\nu_1,\nu_2,\nu_3)\in V}\sig\left(\frac{\partial^2 \phi}{\partial \nu_1^2},\frac{\partial^2 \phi}{\partial \nu_2^2},\frac{\partial^2 \phi}{\partial \nu_3^2}\right) + \bO(h^2+d\theta),
\end{align*}
where we used the fact that
\[
\frac{\partial^2 \phi}{\partial w_j^2} = \frac{\partial^2 \phi}{\partial v_j^2} + \bO(d\theta).
\]

In addition, since the set of grid vectors $\mathcal{G}$ is a subset of the set of all orthogonal vectors $V$ up to scaling, we find that
\begin{align*}
\min_{(\nu_1,\nu_2,\nu_3)\in \mathcal{G}}\sig\left(\Dt_{\nu_1\nu_1}\phi,\Dt_{\nu_2\nu_2}\phi,\Dt_{\nu_3\nu_3}\phi\right)		& \geq \min_{(\nu_1,\nu_2,\nu_3)\in V}\sig\left(\Dt_{\nu_1\nu_1}\phi,\Dt_{\nu_2\nu_2}\phi,\Dt_{\nu_3\nu_3}\phi\right)\\
						& = \min_{(\nu_1,\nu_2,\nu_3)\in V}\sig\left(\frac{\partial^2 \phi}{\partial \nu_1^2},\frac{\partial^2 \phi}{\partial \nu_2^2},\frac{\partial^2 \phi}{\partial \nu_3^2}\right) + \bO(h^2).
\end{align*}

Combining the two inequalities deduced above, we conclude the proof.
\end{proof}

\begin{theorem}
Suppose \eqref{2Hessian} has a continuous viscosity solution. Let $u_{h,d\theta}$ denote the solutions of the scheme \eqref{monotone} and $u$ denote the unique viscosity solution of \eqref{2Hessian}. Then, as $h,d\theta,h/d\theta \to 0$, $u_{h,d\theta}$ converges locally uniformly to $u$.
\end{theorem}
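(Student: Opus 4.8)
The plan is to deduce the result from the Barles--Souganidis convergence framework via the method of half-relaxed limits, using the three structural facts already established: the scheme \eqref{monotone} is degenerate elliptic (Lemma~\ref{degenerateelliptic}), hence monotone and $l^\infty$-nonexpansive; it is consistent (Lemma~\ref{consistency}); and \eqref{2Hessian} satisfies the comparison principle \eqref{CP}. The pieces that still need to be supplied are (i) existence of the discrete solutions $u_{h,d\theta}$ together with a uniform a priori bound (discrete stability); (ii) the identification of the upper and lower half-relaxed limits of the family $\{u_{h,d\theta}\}$ as a viscosity subsolution and supersolution of \eqref{2Hessian}; and (iii) the passage from this to locally uniform convergence.

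For step (i), I would first record the discrete comparison principle implied by Lemma~\ref{degenerateelliptic}: if $v$ and $w$ are grid functions with $F^M[v]\le 0\le F^M[w]$, then $v\le w$ (the boundary rows $u-g$ make the scheme proper, so the standard argument for degenerate elliptic schemes applies). Existence of $u_{h,d\theta}$ then follows by a standard argument (e.g.\ Perron's method for monotone schemes, or a direct fixed-point iteration) once a discrete sub- and super-solution are exhibited, and those barriers also give the stability bound. A discrete subsolution is the constant $\underline w\equiv\min_{\partial\Omega} g$: in $\Omega$ one has $S_2^M[\underline w]-f=-f\le 0$ since $f\ge 0$, and $\underline w-g\le 0$ on $\partial\Omega$. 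A discrete supersolution is the quadratic $\overline w(x)=\max_{\partial\Omega}g+\tfrac{C}{2}\abs{x}^2$ with $C=\sqrt{\max_{\overline\Omega}f/3}$: centered differences are exact on quadratics, so $\Dt_{\nu\nu}\overline w\equiv C$ for every admissible grid direction $\nu$, whence $S_2^M[\overline w]=\sigex(C,C,C)=3C^2\ge \max_{\overline\Omega} f$ in $\Omega$, while $\overline w\ge\max_{\partial\Omega}g\ge g$ on $\partial\Omega$. The discrete comparison principle then gives $\min_{\partial\Omega}g\le u_{h,d\theta}\le \max_{\partial\Omega}g+\tfrac32 C$ uniformly in $h$ and $d\theta$, which is the needed stability (and also uniqueness of $u_{h,d\theta}$).

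For step (ii), I would define the half-relaxed limits
\[
\overline u(x_0)=\limsup_{\substack{h,d\theta\to 0\\ x\to x_0}} u_{h,d\theta}(x),\qquad
\underline u(x_0)=\liminf_{\substack{h,d\theta\to 0\\ x\to x_0}} u_{h,d\theta}(x),
\]
which are finite by the stability bound and satisfy $\underline u\le \overline u$ on $\overline\Omega$. The claim is that $\overline u$ is a viscosity subsolution and $\underline u$ a viscosity supersolution of \eqref{2Hessian} in the sense of Definition~\ref{def:visc}. This is the standard argument: given $\phi\in C^4$ such that $\overline u-\phi$ has a strict local maximum at $x_0\in\overline\Omega$, one extracts sequences $(h_n,d\theta_n)\to 0$ (along the admissible regime $h/d\theta\to 0$) and grid points $x_n\to x_0$ at which $u_{h_n,d\theta_n}-\phi$ attains a local grid maximum with $u_{h_n,d\theta_n}(x_n)\to\overline u(x_0)$; rewriting maximality as $u_{h_n,d\theta_n}(y)-u_{h_n,d\theta_n}(x_n)\le \phi(y)-\phi(x_n)$ for neighbours $y$ of $x_n$, and using that $F^M$ is nondecreasing in these differences and unchanged under adding a constant in its interior rows, gives $0=F^M[u_{h_n,d\theta_n}](x_n)\le S_2^M[\phi](x_n)-f(x_n)$ when $x_0\in\Omega$ (with the corresponding one-sided inequality involving $u-g$ when $x_0\in\partial\Omega$). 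Letting $n\to\infty$ and invoking the consistency estimate of Lemma~\ref{consistency}, which vanishes in the regime $h,d\theta,h/d\theta\to 0$ (this is where $h/d\theta\to 0$ enters, to control the wide-stencil directional differences), produces the viscosity subsolution inequality at $x_0$; the supersolution statement for $\underline u$ is symmetric. A minor point is that Lemma~\ref{consistency} is stated for $C^4$ test functions whereas Definition~\ref{def:visc} uses $C^2$ ones, which is harmless since it suffices to test viscosity solutions against $C^\infty$ test functions.

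Finally, for step (iii), the comparison principle \eqref{CP} -- which applies because \eqref{2Hessian} is assumed to possess a continuous viscosity solution -- applied to the subsolution $\overline u$ and the supersolution $\underline u$ gives $\overline u\le\underline u$ on $\overline\Omega$; together with $\underline u\le\overline u$ this forces $\overline u=\underline u$, so this common value is a continuous function that is both a sub- and a supersolution, hence equals the unique viscosity solution $u$. Equality of the two half-relaxed limits is precisely equivalent to locally uniform convergence $u_{h,d\theta}\to u$, which is the conclusion. The step I expect to be the main obstacle is handling the scheme in a neighbourhood of $\partial\Omega$: the wide stencils of \ref{monotone2Hessian} reach outside the box $[0,1]^3$ at grid points close to the boundary, so the scheme, the discrete comparison principle, the barrier computations, and the limit argument must all be set up with a boundary layer in mind (e.g.\ by shrinking the stencil or restricting the admissible directions there, keeping the scheme consistent though locally lower order), and one must check that the half-relaxed limits satisfy the boundary part of Definition~\ref{def:visc} so that \eqref{CP} is applicable; the interior argument is routine once consistency and monotonicity are in hand.
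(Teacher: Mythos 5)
Your proposal is correct and follows essentially the same route as the paper: the paper's proof is a two-sentence appeal to the Barles--Souganidis framework combined with Lemmas~\ref{degenerateelliptic} and \ref{consistency} and the comparison principle \eqref{CP}, and what you have written is simply that framework unpacked (half-relaxed limits, explicit barriers for existence/stability in place of the cited $l^\infty$-nonexpansiveness result of \cite{ObermanSINUM}, and the comparison step). Your closing remarks---that $h/d\theta\to 0$ is what controls the wide-stencil consistency error and that the stencil truncation near $\partial\Omega$ is the delicate point---are accurate and go beyond what the paper's proof addresses explicitly.
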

\begin{proof}
The convergence follows from verifying consistency and degenerate ellipticity, as explained above, by the Barles and Souganidis theory \cite{BSnum}. This is accomplished in Lemmas \ref{degenerateelliptic} and \ref{consistency}. Notice that the PDE \eqref{2Hessian} has a comparison principle \eqref{CP} as pointed out in \autoref{sec:viscosity}.
\end{proof}

\begin{remark}
The assumption of the existence of a continuous viscosity solution is required for the comparison principle. This assumption is restrictive since the existence result of Theorem~\ref{wellposedness} requires smooth data, which is not the case for the examples considered here. 
In fact, continuous viscosity solutions can exist in much more general settings.  However, a precise well-posedness result for the (weak) Dirichlet problem is not presently available, and the highly technical details require significant additional work that is beyond the scope of the present article.
\end{remark}

\subsection{Solvers for the monotone finite difference scheme}
In this section we present two solvers for the monotone finite difference scheme.

\subsubsection{Parabolic solver}
Using the monotone discretization $S_2^M[u]$, the simplest solver for the $2$-Hessian equation is to use the fixed point method
\bq\label{parabolicmethod}
u^{n+1} = u^n-\alpha(S_2^M[u]-f)
\eq
which corresponds to the discrete version of the parabolic equation $u_t = -S_2[u]+f$ using a forward Euler step. The fixed point iteration will be a contraction in the maximum norm provided that we choose $\alpha$ small enough, as dictated by the nonlinear CFL condition \cite{ObermanSINUM}, which in this case means $\alpha = \mathcal{O}(h^4)$. This will make the solver very slow. However, since we extended $\sig$ to be degenerate elliptic in $\R^3$, this is a global solver, meaning that it will converge regardless of the initial guess we choose.

\subsubsection{Newton solver}
As with the standard finite difference scheme, one can also use a (damped) Newton solver. In this case the Jacobian for the monotone discretization is obtained by using Danskin's Theorem \cite{Bertsekas} and the product rule:
\[\nabla_u S_2^M[u] = \begin{cases}
-2(\Dt_{\nu^*_1\nu^*_1}u)\Dt_{\nu^*_1\nu^*_1}, & \text{if} \: \Dt_{\nu^*_1\nu^*_1}u + \Dt_{\nu^*_2\nu^*_2}u < 0,\\
\sum\limits_{\substack{\nu_1,\nu_2 \in \{\nu^*_1,\nu^*_2,\nu^*_3\},\\\nu_1 \neq \nu_2}}(\Dt_{\nu_1\nu_1}u)\Dt_{\nu_2\nu_2}, & \text{otherwise,} \\
\end{cases}\]
where $\nu^*_j$ are the directions active in the minimum in \ref{monotone2Hessian}, with $\Dt_{\nu^*_1\nu^*_1}u \leq \Dt_{\nu^*_2\nu^*_2}u \leq \Dt_{\nu^*_3\nu^*_3}u$. Unlike the previous solver, this is a local solver, meaning that we need a good initial guess in order to have convergence.

\section{Computational results}
In this section we summarize the results of a number of different examples using the solvers described in the previous section. These computations are performed on a $N\times N\times N$ grid on the cube $[0,1]^3$. Unless otherwise mentioned, all solvers were initialized with an initial guess provided by the explicit method \eqref{J}, which we iterate until $\left|S_2^A[u^n]-f\right| < 10^{-1}$. The initial guess for the explicit method \eqref{J} was the exact solution with some noise from a uniform distribution. As stopping criteria for the Newton solver we used $\left|S_2^H[u^n]-f\right| < 10^{-10}$ where $H \in \{A,M\}$.
Solutions were also computed using \eqref{J} and \eqref{semi} with very similar results to the ones provided by the Newton solver being obtained. For that reason, we choose not to display them here.

\begin{remark}
Notice that at points near the boundary of the domain, some values required by the wide stencil will not be available. For this reason and to simplify things, we set the exact solution at those points. However it is important to point out that we can use interpolation at the boundary to construct a (lower accuracy) stencil, thus avoiding the need to initialize with the exact solution.
\end{remark}

\begin{example}[Quadratic function]
We consider the case where $u$ is a non-convex (but $2$-admissible function) given by
\bq\label{ex1}
u(\x) = x_1^2-\frac{1}{2}x_2^2+2x_3^2, \quad f(\x) = 2.
\eq
with $\x = (x_1,x_2,x_3)$.
In Table \ref{table:Ex1errors}, we compare the results obtained using standard finite differences and the monotone schemes with different stencil sizes. For this example, we used the Newton solver for all schemes.

All methods provide machine accuracy which is expected since the standard finite differences are exact for quadratic functions and the monotone schemes computed the desired directional derivative.
\end{example}

\begin{table}[htp]
\centering\footnotesize
\begin{tabular}{ccccccccc}
\multicolumn{9}{c}{Errors and order, $1^{st}$ Example}\\
N & \multicolumn{2}{c}{Standard} & \multicolumn{2}{c}{Monotone (27-point)} & \multicolumn{2}{c}{Monotone (99-point)} & \multicolumn{2}{c}{Monotone (291-point)}\\
\hline
15 & \num{4.441e-16} & - & \num{4.441e-16} & - & \num{4.441e-16} & - & \num{4.441e-16} & - \\
20 & \num{4.441e-16} & -0.00 & \num{8.882e-16} & -2.27 & \num{8.882e-16} & -2.27 & \num{6.661e-16} & -1.33 \\
25 & \num{4.441e-16} & -0.00 & \num{8.882e-16} & -0.00 & \num{8.882e-16} & -0.00 & \num{8.882e-16} & -1.23 \\
30 & \num{4.441e-16} & -0.00 & \num{1.332e-15} & -2.14 & \num{8.882e-16} & -0.00 & \num{8.882e-16} & -0.00 \\
35 & \num{4.441e-16} & -0.00 & \num{1.332e-15} & -0.00 & \num{8.882e-16} & -0.00 & \num{1.110e-15} & -1.40 \\
\end{tabular}
\caption{Accuracy in the $l^\infty$ norm and order of convergence of the schemes for the first example using the Newton solver.}
\label{table:Ex1errors}
\end{table}

\begin{example}[smooth convex radial function]
We consider now the case where $u$ is given by
\bq\label{ex2}
u(\x) = \exp\left(\frac{\norm{\x-\x_0}^2}{2}\right), \quad f(\x) = (3+2\norm{\x-\x_0}^2)\exp(\norm{\x-\x_0}^2).
\eq

The maximum errors are given in Table \ref{table:Ex2errors}. As in the previous example we used the Newton solver for all schemes.

The standard finite differences provided second order convergence, which was expected since the solution is smooth. The monotone schemes provided only first order convergence (or close to it).
\end{example}

\begin{table}[htp]
\centering\footnotesize
\begin{tabular}{ccccccccc}
\multicolumn{9}{c}{Errors and order, $2^{nd}$ Example}\\
N & \multicolumn{2}{c}{Standard} & \multicolumn{2}{c}{Monotone (27-point)} & \multicolumn{2}{c}{Monotone (99-point)} & \multicolumn{2}{c}{Monotone (291-point)}\\
\hline
15 & \num{2.393e-04} & - & \num{3.472e-04} & - & \num{2.167e-04} & - & \num{1.302e-04} & - \\
20 & \num{1.298e-04} & 2.00 & \num{2.225e-04} & 1.46 & \num{1.518e-04} & 1.17 & \num{1.034e-04} & 0.75 \\
25 & \num{8.197e-05} & 1.97 & \num{1.650e-04} & 1.28 & \num{1.165e-04} & 1.13 & \num{8.552e-05} & 0.81 \\
30 & \num{5.607e-05} & 2.01 & \num{1.346e-04} & 1.08 & \num{9.357e-05} & 1.16 & \num{7.216e-05} & 0.90 \\
35 & \num{4.091e-05} & 1.98 & \num{1.259e-04} & 0.42 & \num{7.809e-05} & 1.14 & \num{6.247e-05} & 0.91 \\
\end{tabular}
\caption{Accuracy in the $l^\infty$ norm and order of convergence of the schemes for the second example using the Newton solver.}
\label{table:Ex2errors}
\end{table}

\begin{example}[smooth non-convex radial function]\label{example:Ex3}
We consider now the case where $u$ is given by
\bq\label{ex3}
u(\x) = \exp\left(2x_1^2-x_2^2+4x_3^2\right), \quad f(\x) = 8\left(1+12x_1^2+6x_2^2+16x_3^2\right)\exp\left(4x_1^2-2x_2^2+8x_3^2\right).
\eq

The maximum errors are given in Table \ref{table:Ex3errors}. Once again the solutions were computed with a Newton solver for all schemes.

The standard finite differences demonstrates again second order convergence. For the monotone schemes, the error tappers off with the grid size and we only see an error reduction by considering wider stencils. This tells us that the directional resolution error dominates the spatial resolution error. It is important to point out that this doesn't contradict our theoretical results since the only thing we proved was that we have convergence as both $h$ and $d\theta$ go to $0$, which we observe here.
\end{example}

\begin{table}[htp]
\centering\footnotesize
\begin{tabular}{ccccccccc}
\multicolumn{9}{c}{Errors and order, $3^{rd}$ Example}\\
N & \multicolumn{2}{c}{Standard} & \multicolumn{2}{c}{Monotone (27-point)} & \multicolumn{2}{c}{Monotone (99-point)} & \multicolumn{2}{c}{Monotone (291-point)}\\
\hline
15 & \num{3.028e-04} & - & \num{3.287e-02} & - & \num{1.110e-02} & - & \num{5.044e-03} & - \\
20 & \num{1.669e-04} & 1.95 & \num{3.312e-02} & -0.02 & \num{1.211e-02} & -0.29 & \num{5.617e-03} & -0.35 \\
25 & \num{1.052e-04} & 1.98 & \num{3.305e-02} & 0.01 & \num{1.260e-02} & -0.17 & \num{5.920e-03} & -0.22 \\
30 & \num{7.218e-05} & 1.99 & \num{3.311e-02} & -0.01 & \num{1.306e-02} & -0.19 & \num{6.396e-03} & -0.41 \\
35 & \num{5.262e-05} & 1.99 & \num{3.302e-02} & 0.02 & \num{1.339e-02} & -0.16 & \num{6.703e-03} & -0.29 \\
\end{tabular}
\caption{Accuracy in the $l^\infty$ norm and order of convergence of the schemes for the third example using the Newton solver.}
\label{table:Ex3errors}
\end{table}

\begin{example}[smooth non-convex radial function]
We consider another example of smooth radial function which is non convex but $2$-admissible:
\bq\label{ex4}
u(\x) = \log(2+\norm{\x}^2), \quad f(\x) = -\frac{4(-6+\norm{\x}^2)}{(2+\norm{\x}^2)^3}.
\eq

The maximum errors are given in Table \ref{table:Ex4errors}. Once again the solutions were computed with a Newton solver, regardless of the scheme.

As in the previous example, standard finite differences provide second order convergence and only with wider stencils we see a decrease in error with the grid size. Moreover, the monotone schemes with wider stencils also exhibit second order convergence (before it tappers off in the case of the $99$-point stencil).
\end{example}

\begin{table}[htp]
\centering\footnotesize
\begin{tabular}{ccccccccc}
\multicolumn{9}{c}{Errors and order, $4^{th}$ Example}\\
N & \multicolumn{2}{c}{Standard} & \multicolumn{2}{c}{Monotone (27-point)} & \multicolumn{2}{c}{Monotone (99-point)} & \multicolumn{2}{c}{Monotone (291-point)}\\
\hline
15 & \num{4.723e-05} & - & \num{1.664e-03} & - & \num{3.882e-04} & - & \num{4.909e-04} & - \\
20 & \num{2.564e-05} & 2.00 & \num{1.668e-03} & -0.01 & \num{1.787e-04} & 2.54 & \num{2.500e-04} & 2.21 \\
25 & \num{1.615e-05} & 1.98 & \num{1.674e-03} & -0.01 & \num{1.007e-04} & 2.46 & \num{1.462e-04} & 2.30 \\
30 & \num{1.111e-05} & 1.98 & \num{1.672e-03} & 0.01 & \num{8.617e-05} & 0.82 & \num{9.063e-05} & 2.53 \\
35 & \num{8.052e-06} & 2.02 & \num{1.670e-03} & 0.01 & \num{9.620e-05} & -0.69 & \num{6.506e-05} & 2.08 \\
\end{tabular}
\caption{Accuracy in the $l^\infty$ norm and order of convergence of the schemes for the fourth example using the Newton solver.}
\label{table:Ex4errors}
\end{table}

\begin{example}[non smooth convex function]
We consider now the case where $u$ is given by
\bq\label{ex5}
u(\x) = \frac{1}{2}\left((\norm{\x-\x_0}-0.2)^+\right)^2, \quad f(\x) = \left(3+\frac{1}{25\norm{\x-\x_0}^2}-\frac{4}{5\norm{\x-\x_0}} \right)\textbf{1}_{\left\{\norm{\x-\x_0} > 0.2\right\}}(\x).
\eq

The maximum errors are given in Table \ref{table:Ex5errors}. Due to its degenerate ellipticity, the monotone schemes required the use of the damped Newton solver.

Despite the lack of smoothness of the solution, the Newton solver with standard finite differences still converged. As for the monotone scheme, there was a significant increase in the number of iterations required: the wider the stencil, the more iterations required (around 10 times more iterations when compared to the Newton solver for the naive finite differences in the worst cases).

For the $291$-stencil, as in Example \ref{example:Ex3}, the error tapers off, indicating that the directional resolution error dominates the spatial error and, again, we still see the convergence as both $h$ and $d\theta$ go to $0$.

\end{example}

\begin{table}[htp]
\centering\footnotesize
\begin{tabular}{ccccccccc}
\multicolumn{9}{c}{Errors and order, $5^{th}$ Example}\\
N & \multicolumn{2}{c}{Standard} & \multicolumn{2}{c}{Monotone (27-point)} & \multicolumn{2}{c}{Monotone (99-point)} & \multicolumn{2}{c}{Monotone (291-point)}\\
\hline
15 & \num{7.580e-04} & - & \num{2.261e-03} & - & \num{7.707e-04} & - & \num{5.086e-04} & - \\
20 & \num{6.506e-04} & 0.50 & \num{2.329e-03} & -0.10 & \num{7.235e-04} & 0.21 & \num{1.924e-04} & 3.18 \\
25 & \num{3.353e-04} & 2.84 & \num{2.057e-03} & 0.53 & \num{5.871e-04} & 0.89 & \num{1.758e-04} & 0.39 \\
30 & \num{3.032e-04} & 0.53 & \num{2.156e-03} & -0.25 & \num{5.431e-04} & 0.41 & \num{2.197e-04} & -1.18 \\
35 & \num{2.129e-04} & 2.22 & \num{2.018e-03} & 0.42 & \num{5.159e-04} & 0.32 & \num{2.351e-04} & -0.43 \\
\end{tabular}
\caption{Accuracy in the $l^\infty$ norm and order of convergence of the schemes for the fifth example using the Newton solver.}
\label{table:Ex5errors}
\end{table}

\begin{example}[example with blow-up]
We considered as well the case
\bq\label{ex6}
u(\x) = -\sqrt{3-\norm{\x}^2}, \quad f(x) = -\frac{-9+\Norm{\x}^2}{(-3+\Norm{\x}^2)^2}.
\eq
Notice that $f$ is unbounded at the boundary point $(1,1,1)$ and $u$ will be singular at that point as well. Despite that the Newton solver still converged, but with a smaller rate of convergence  (approximately 0.3). It is important to observe that in the case of the Monge-Amp\`{e}re, the Newton solver failed to converge in the analogue example. This may be because the Monge-Amp\`ere equation is more strongly nonlinear than the 2-Hessian equation. The better accuracy of the wider monotone schemes is explained by the fact that the exact solution is prescribed at more grid points near the boundary of the (computational) domain, in particular, where $u$ is singular and $f$ is unbounded.
\end{example}

\begin{table}[htp]
\centering\footnotesize
\begin{tabular}{ccccccccc}
\multicolumn{9}{c}{Errors and order, $6^{th}$ Example}\\
N & \multicolumn{2}{c}{Standard} & \multicolumn{2}{c}{Monotone (27-point)} & \multicolumn{2}{c}{Monotone (99-point)} & \multicolumn{2}{c}{Monotone (291-point)}\\
\hline
15 & \num{1.104e-03} & - & \num{5.627e-03} & - & \num{5.600e-04} & - & \num{3.026e-04} & - \\
20 & \num{1.096e-03} & 0.02 & \num{5.224e-03} & 0.24 & \num{4.229e-04} & 0.92 & \num{2.628e-04} & 0.46 \\
25 & \num{1.054e-03} & 0.17 & \num{4.891e-03} & 0.28 & \num{3.454e-04} & 0.87 & \num{2.344e-04} & 0.49 \\
30 & \num{1.007e-03} & 0.24 & \num{4.698e-03} & 0.21 & \num{2.921e-04} & 0.88 & \num{2.102e-04} & 0.58 \\
35 & \num{9.621e-04} & 0.29 & \num{4.612e-03} & 0.12 & \num{2.538e-04} & 0.89 & \num{1.906e-04} & 0.62 \\
\end{tabular}
\caption{Accuracy in the $l^\infty$ norm and order of convergence of the schemes for the sixth example using the Newton solver.}
\label{table:Ex6errors}
\end{table}

\begin{example}\label{surfacelevelsetscube}
We consider as well the example with $f \equiv 1$ and $g \equiv 0$ Dirichlet boundary conditions. No exact solution is known. In Figure \ref{fig:Ex7surfacelevelsetplots}, we illustrate some of the surface plots of the level sets $u=c$ of the solution with the standard finite differences and monotone scheme with $c\in\{-0.01,-0.03,-0.07\}$. Note that the zero level set ($c=0$) is the boundary of the cube $[0,1]^3$ where the zero Dirichlet boundary conditions are prescribed. The surface plots become spheres as $c$ decreases, with $c=-.01$ being the only where there's a tangible difference between the two schemes, most likely due to the expected higher accuracy from the standard finite differences. In Figure \ref{fig:Ex7curve}, we plot the curve $u(t,t,t)$ with $t \in [0,1]$ and see that there's a small difference between the solutions from the standard finite differences and the monotone scheme.
\end{example}

\begin{figure}[htp]
\centering
\begin{tabular}{cc}
\hspace{-1in}\includegraphics[width=0.65\textwidth]{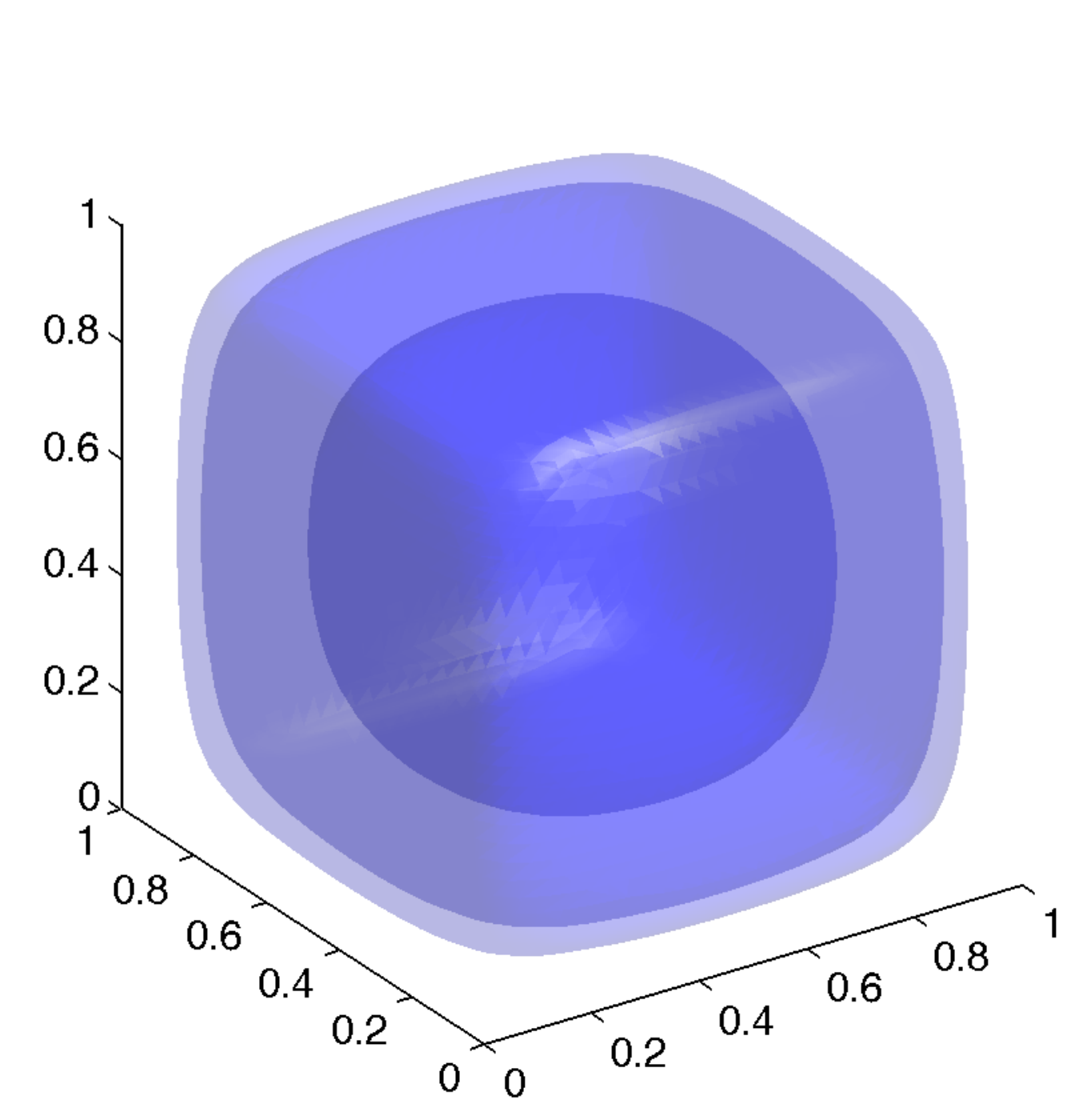} & \includegraphics[width=0.65\textwidth]{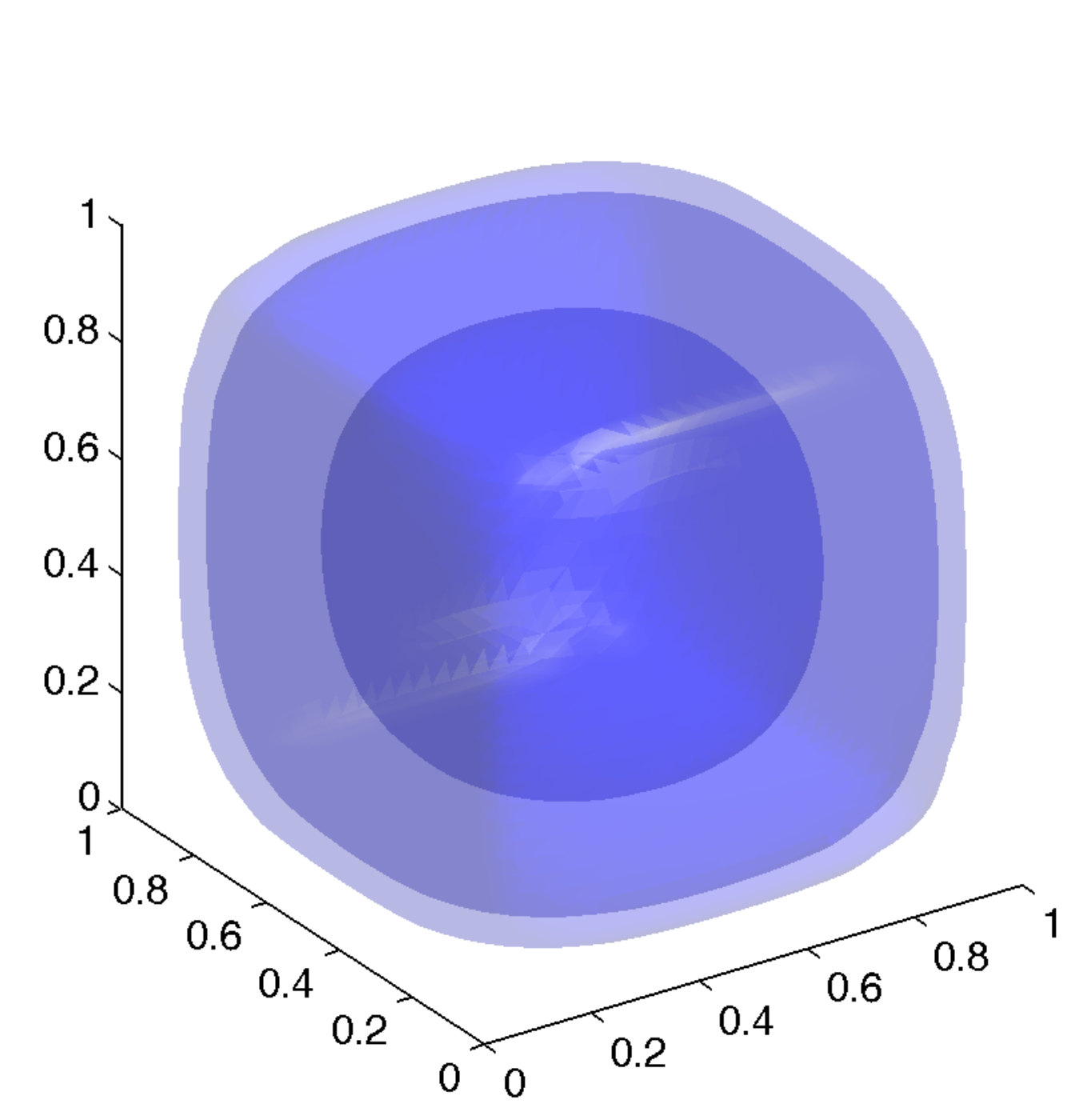}
\end{tabular}
\caption{Surface plots of the level sets of the solution to Example \ref{surfacelevelsetscube} on a $30\times 30\times 30$ grid with the naive finite differences (left) and the $27$-point monotone scheme (right).}
\label{fig:Ex7surfacelevelsetplots}
\end{figure}

\begin{figure}[htp]
\centering
\begin{tabular}{cc}
\includegraphics[width=0.65\textwidth]{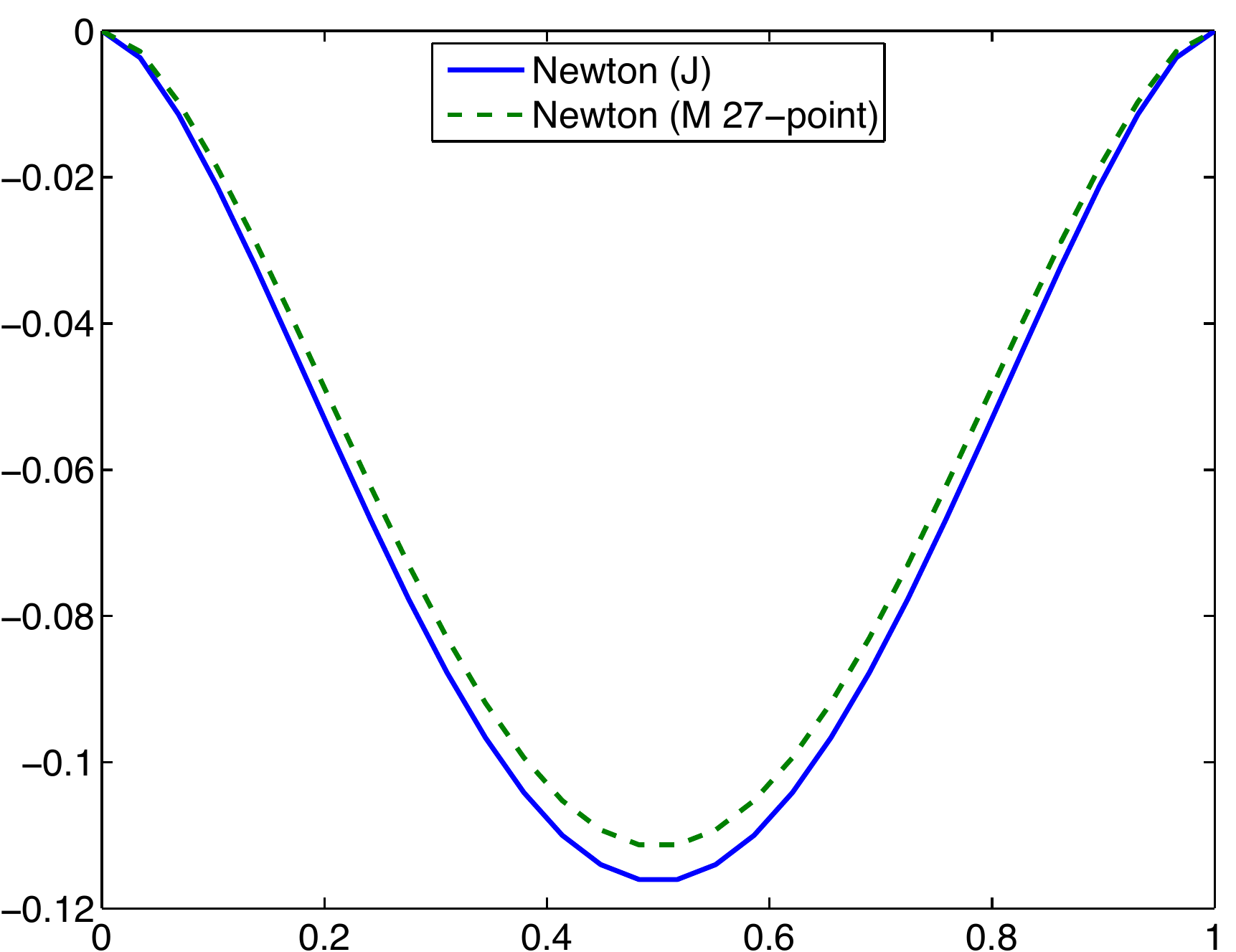}\end{tabular}
\caption{Plot of the curves $t\mapsto u(t,t,t)$ of the solution of Example \ref{surfacelevelsetscube} on a $30\times 30 \times 30$ grid.}
\label{fig:Ex7curve}
\end{figure}

\begin{example}\label{surfacelevelsetssphere}
We consider as well the example with $f \equiv 1$ and $g \equiv 0$ Dirichlet boundary conditions but with a different domain $\Omega = \Omega_1 \cup \Omega_2$ where
\[\Omega_1 = \{(x,y,z)\in\R^3:(x-0.35)^2+(y-0.35)^2+(z-0.5)^2 < 0.3^2\},\]
\[\Omega_2 = \{(x,y,z)\in\R^3:(x-0.65)^2+(y-0.65)^2+(z-0.5).^2 < 0.3^2\}.\]

No exact solution is known. In Figure \ref{fig:Ex8surfacelevelsetplots}, we illustrate some of the surface plots of the level sets $u=c$ of the solution with the standard finite differences and monotone scheme with $c\in\{0,-0.01,-0.02,-0.03,-0.035,-0.039\}$. In this case the zero level set is not convex, with the level sets $u=c$ becoming more convex with smaller values of $c$. In this case the difference between the standard finite differences and monotone scheme is even smaller than in Example \ref{surfacelevelsetscube}, as we can see in Figure \ref{fig:Ex8curve}, where we plot the curve $u(t,t,t)$ with $t \in [0,1]$.
\end{example}

\begin{figure}[htp]
\centering
\begin{tabular}{cc}
\hspace{-1in}\includegraphics[width=0.65\textwidth]{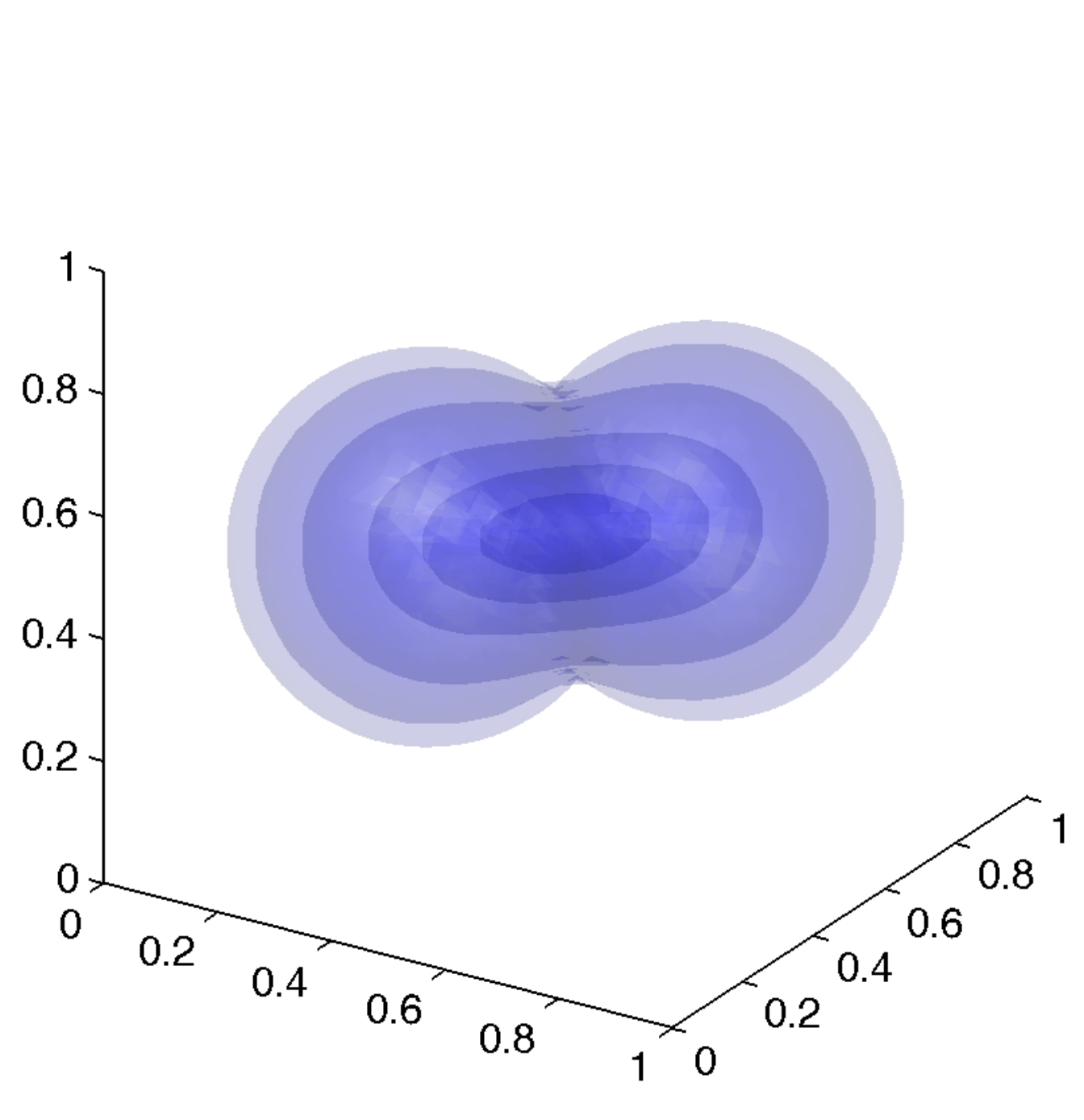} & \includegraphics[width=0.65\textwidth]{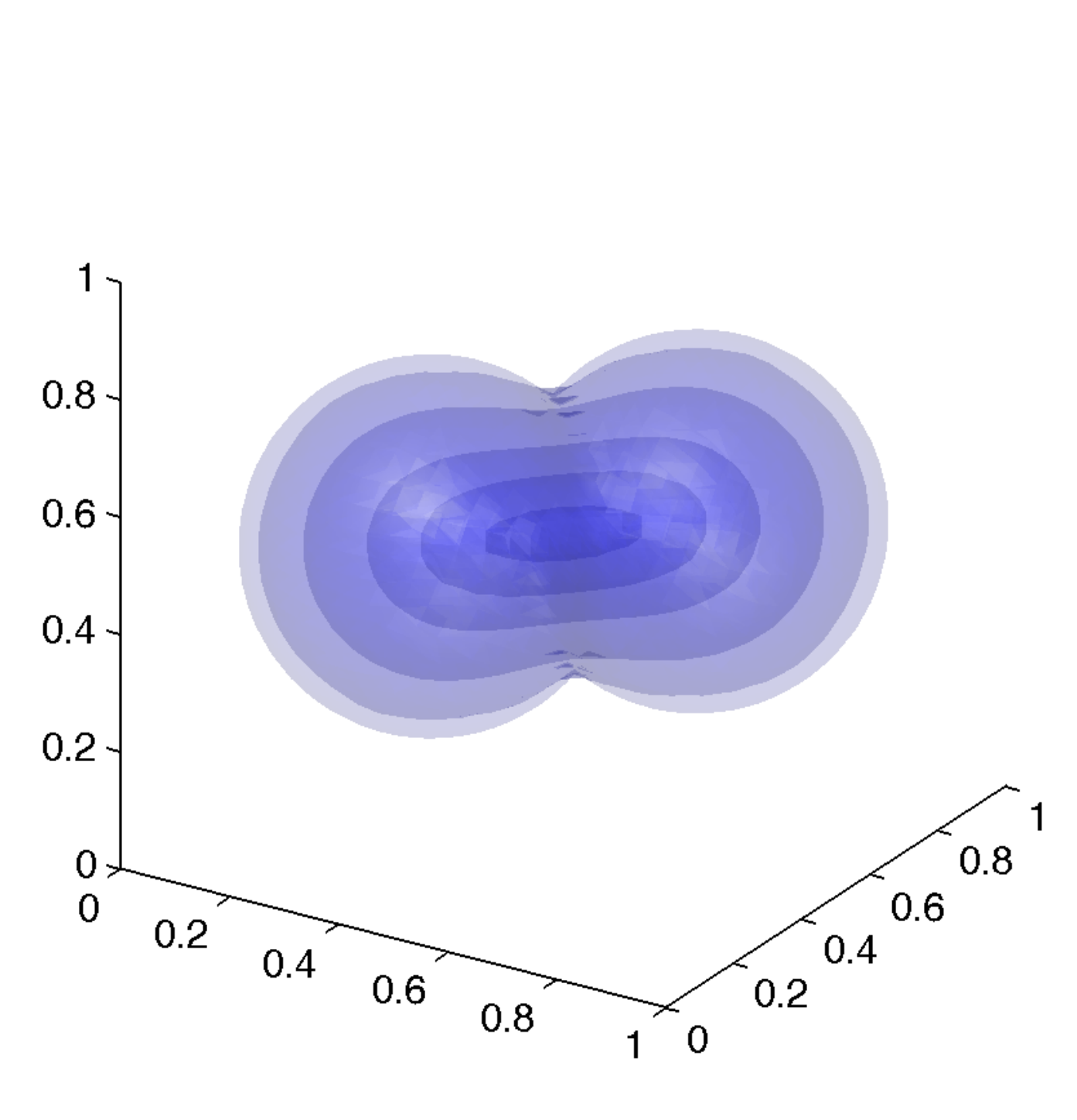}
\end{tabular}
\caption{Surface plots of the level sets of the solution to Example \ref{surfacelevelsetssphere} on a $30\times 30\times 30$ grid with the naive finite differences (left) and the $27$-point monotone scheme (right).}
\label{fig:Ex8surfacelevelsetplots}
\end{figure}

\begin{figure}[htp]
\centering
\begin{tabular}{cc}
\includegraphics[width=0.65\textwidth]{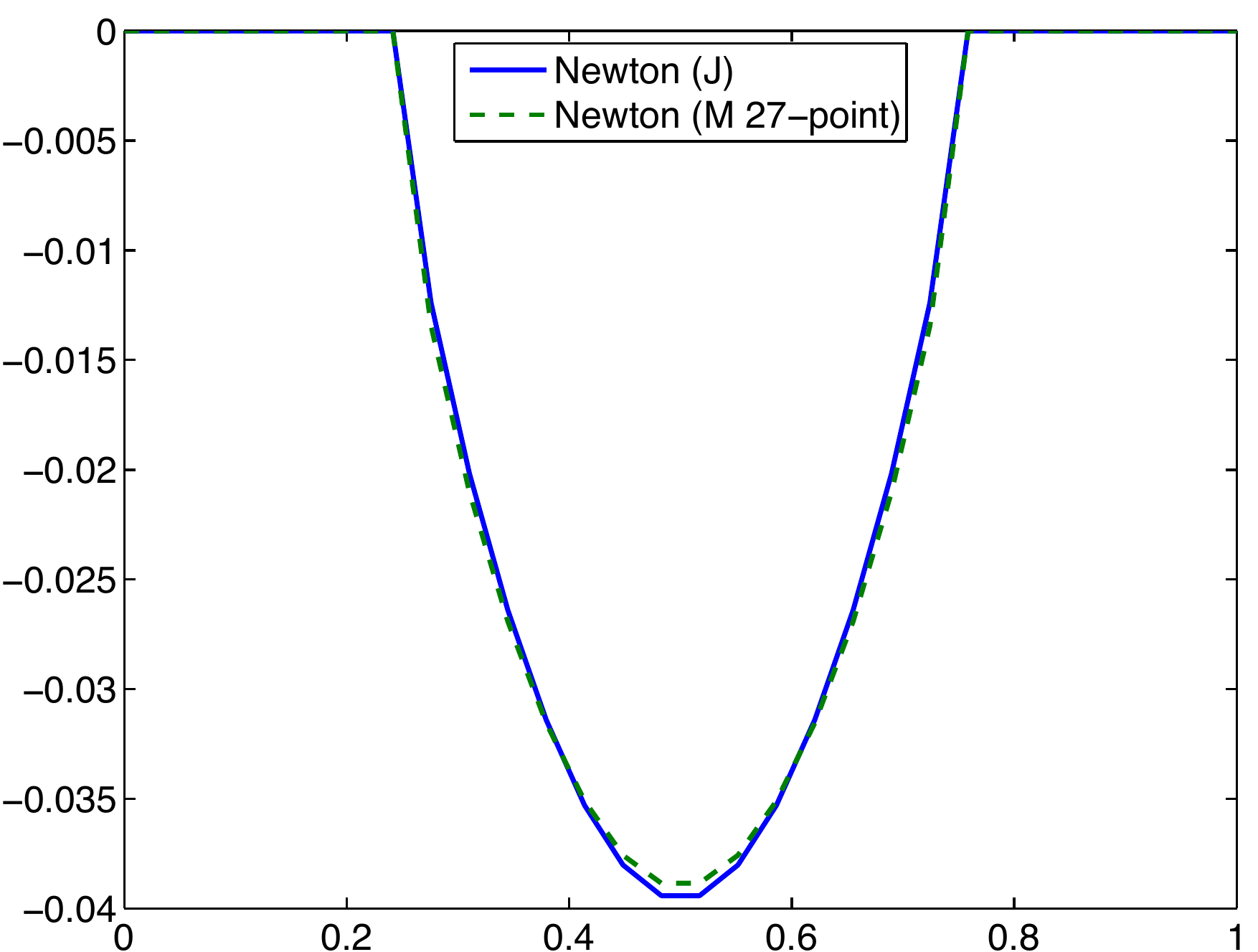}\end{tabular}
\caption{Plot of the curves $t\mapsto u(t,t,t)$ of the solution of Example \ref{surfacelevelsetssphere} on a $30\times 30 \times 30$ grid.}
\label{fig:Ex8curve}
\end{figure}


\section{Conclusions}\label{sec:conclusions}
\vspace{1.2ex}
The 2-Hessian equation is a fully nonlinear Partial Differential Equation which is elliptic provided the solutions are restricted to a convex cone, which we called plane-subharmonic.  It is natural to compare this equation with the Monge-Amp\`{e}re PDE, which is elliptic on the cone of convex functions, and which has been studied numerically   in previous work by two of the authors.  The elliptic 2-Hessian equation is more challenging because the constraints for ellipticity are less restrictive.

We gave two different discretizations for the $2$-Hessian equation in the three-dimensional case: a naive one obtained by simply using standard finite differences to discretize the Hessian and a monotone discretization that takes advantage of a characterization of the operator using a matrix inequality \eqref{monotoneidentity}. The monotone discretization is provably convergent but less accurate, because the monotone discretization required the use of a wide stencil.
 Computational results were provided using exact solutions of varying regularity and shape, from smooth to non differentiable, and from convex to nonconvex.

The naive discretization failed, unless we introduced a mechanism for selecting the correct 2-admissible  (plane-subharmonic) solution.  Once this mechanism was introduced, experimental results on a variety of solutions demonstrated that the method appeared to converge. 
The standard finite difference discretization failed using a standard  parabolic solver.
Two alternative solvers were presented, which  enforced the ``plane-subharmonic'' restriction and proved to work numerically for all the examples considered. Additionally, a Newton solver was also implemented, converging for all examples considered, even for degenerate ones or with singular right-hand sides, whenever initialized with a good initial guess. For smooth examples, we obtained second order convergence.

The monotone discretization, less accurate due the introduction of a directional resolution to make it monotone, is stable and provably convergent.  Numerical examples show that the directional resolution easily dominates the spacial resolution, a natural consequence of the three dimensional setting.

Moreover, one could have implemented filtered schemes, previously introduced in~\cite{FroeseObermanFiltered}, which would provide schemes that are provably convergent but with greater accuracy than the monotone schemes. However, we did not implement them here, since our main goal was to compare the two different discretizations presented and, moreover, the accurate scheme by itself proved to be convergent for all the examples considered, even degenerate ones.

The 2-Hesssian equation is related to the scalar curvature, these are equal up to a constant when the gradient of the function vanishes. A natural extension to the current work is to build schemes for the prescribed scalar curvature of a three dimensional graph.

In this work, we chose the box domain since it is easier to deal with computationally as the boundary conditions are easily implemented. Dealing with more complex boundaries requires additional work. 
It is challenging to obtain higher order at the boundary while maintaining second order directional derivatives. A natural approach would be a combination of filtered schemes at the boundary and multi-scale grids~\cite{ObermanZwiers}. Unstructured grids are another possibility, having been used successfully by one of the authors to solve several fully nonlinear elliptic equations~\cite{FroeseMeshfree}. 

\bibliographystyle{alpha}
\bibliography{TwoHessian}

\end{document}